\theoremstyle{plain}
\newtheorem{theorem}{Theorem}[section]
\newtheorem{lemma}[theorem]{Lemma}
\newtheorem{proposition}[theorem]{Proposition}
\newtheorem{corollary}[theorem]{Corollary}
\theoremstyle{definition}
\newtheorem{definition}[theorem]{Definition}
\newtheorem{remark}[theorem]{Remark}
\numberwithin{equation}{section}
\newcommand{\be}{\begin{equation}}
\newcommand{\ee}{\end{equation}}
\newcommand{\e}{\varepsilon}
\newcommand{\M}{{\mathcal M}}
\newcommand{\R}{{\mathbb R}}
\newcommand{\Sn}{{\mathbb S}}
\newcommand{\N}{{\mathbb N}}
\newcommand{\Z}{{\mathbb Z}}
\newcommand{\Rk}{{\R}^k}
\newcommand{\Rd}{{\R}^d}
\newcommand{\Rkd}{\R^{k\times d}}
\newcommand{\setmeno}{\!\setminus\!}
\newcommand{\hd}{{\mathcal H}^{d-1}}
\newcommand{\Ld}{{\mathcal{L}}^d}
\newcommand{\GBVsvector}{GBV_\star(A;\Rk)}
\newcommand{\GBVsscalar}{GBV_\star(A)}
\newcommand{\BVvect}{BV(A;\Rk)}
\newcommand{\BVsc}{BV(A)}
\newcommand{\A}{{\mathcal A}}
\newcommand{\B}{{\mathcal B}}
\newcommand{\trunc}[2]{#1^{(#2)}}
\newcommand{\jump}[1]{J_{#1}}
\newcommand{\mres}{\mathbin{\vrule height 1.6ex depth 0pt width
0.13ex\vrule height 0.13ex depth 0pt width 1.3ex}}
\title[Generalised vector-valued functions of Bounded Variation]{A new space of  Generalised vector-valued functions of Bounded Variation}
\begin{document}

\author[Davide Donati]{Davide Donati}
\address[Davide Donati]{SISSA, Via Bonomea 265, 34136 Trieste,
Italy}
\email[Davide Donati]{ddonati@sissa.it}

\begin{abstract}
    In \cite{DalToa22}, the authors introduced the space of  scalar-valued functions $\GBVsscalar$ to minimise a class of functionals whose study is motivated by fracture mechanics. In this paper, we extend the definition of $\GBVsscalar$ to the vectorial case, introducing the space $\GBVsvector$. We study the main properties of $\GBVsvector$ and  prove a lower semicontinuity result useful for minimisation purposes. With the Direct Method in mind, we adapt the arguments of \cite{DalToa22} to show that minimising sequences in $\GBVsvector$ can be modified to obtain a minimising sequence converging $\Ld$-a.e in $A$.
\end{abstract}
\maketitle
{\bf Keywords: }
Generalised functions of bounded variation,  minimum problems, lower semicontinuity
\medskip
 
{\bf 2020 MSC: }  26A45, 49J45

\section{Introduction}
In his seminal paper \cite{Griffith}, Griffith introduced the idea that the stability or growth of the crack path in a material  is  determined by the competition of two energies acting at two different scales: the bulk energy spent to elastically deform the material and the surface energy spent to widen the pre-existing crack. 
In \cite{francfort1998revisiting}, Francfort and Marigo revisited Griffith's theory and proposed a model of crack growth based on an energy minimisation principle, renewing the interest in the study of energy functionals of the type
\begin{equation}\label{eq:Prototype Functional introduction}
\int_{A} f(x,\nabla u)\, dx+\int_{\jump{u}} g(x, [u],\nu_u)\,d\hd,
\end{equation}
where $A\subset \Rd$ is an open bounded set, the displacement $u\colon A\to\Rk$ can vary among different spaces of functions with bounded variation, $[u]=u^+-u^-$  represents the crack opening along the crack surface $\jump{u}$, $\nu_u$ is the unit normal at $\jump{u}$, and $\nabla u$ is the approximate gradient of $u$. 

The most common examples of such functionals  satisfy $g\equiv$ constant (see for instance \cite{Bourdin-Francfort-Marigo}), a choice which for lower semicontinuity reasons (see \cite{Ambrosio1989}) forces $f$ to be superlinear and requires to consider $SBV(A;\Rk)$ as underlying space. Many relevant functionals fall in this class. For instance, setting $f(x,\xi)=f(\xi)=|\xi|^p$, for $p>1$ one obtains the widely studied Mumford-Shah functional.

However, the hypothesis that $g$ is independent of the crack opening $[u]$ is in general not fully satisfactory in describing the fracture of materials, as cohesive forces are observed to be in action along the crack surface (see \cite{Barenblatt1962THEMT,Dugdale1960YieldingOS}).   This last observation suggests the introduction of cohesive models, in which integrands $g$ that are linear around the origin are considered,  the most simple of this kind of integrands being  $g(x,\zeta,\nu)=|\zeta|\land 1$, where for $a,b\in\R$, $a\land b$ is the minimum between $a$ and $b$. If one considers functions $g$ satisfying growth conditions of type 
\begin{equation}\label{eq:Dugdale bound introduction}
c(|\zeta|\land 1)\leq g(x,    \zeta,\nu)\leq C( |\zeta|\land 1),
\end{equation}
for suitable constants $0<c\leq C$, lower semicontinuity reasons (see \cite[Theorem 3.1]{Braides1995}) force $f$ to have linear growth and to consider an additional term in functionals \eqref{eq:Prototype Functional introduction}. The object of study  then becomes
\begin{equation}\label{eq:Functional Completo Intro}
    F(u)=\int_{A} f(x,\nabla u)\, dx+\int_A f^\infty\Big(x,\frac{dD^cu}{d|D^cu|}\Big)\, d|D^cu|+\int_{\jump{u}} g(x, [u],\nu_u)\,d\hd,
\end{equation}
where $f^\infty$ is the recession function of $f$, $D^cu$ is the Cantor part of the displacement $u$,  which one may interpret as a term carrying information about the formation of microfractures at a diffuse scale, and $dD^cu/d|D^cu|$ is the Radon-Nikod\'ym derivative of $D^cu$ with respect to $|D^cu|$.

Functionals of type \eqref{eq:Functional Completo Intro} with $g$ satisfying \eqref{eq:Dugdale bound introduction} and approximations of such functionals
have been extensively studied in recent years (see, for instance, \cite{Alicandro-Focardi,Bonacini-Conti-Iurlano,bonacini2023convergence,Conti-Focardi-Iurlano,conti2022phasefield, DalMaso-Orlando-Toader,DalToa23,DalToa22,DalToa23b}).

The choice of the function space where to study the functional $F$ is a delicate matter. Since by \eqref{eq:Dugdale bound introduction} $g$ does not control the full amplitude of  the crack openings $|[u]|$, one is tempted to consider as ambient space Ambrosio's $GBV(A)^k$ (see \cite[Section 1]{Ambrosio1990ExistenceTF} and \cite[Chapter 4]{AmbFuscPall}), where all the quantities appearing in \eqref{eq:Functional Completo Intro} are well-defined, including $D^cu$ (see for instance \cite[Lemma 2.10]{Alicandro-Focardi}). However, under the growth conditions \eqref{eq:Dugdale bound introduction}, the sublevels of $F$ are not precompact in $GBV(A)^k$,  and compactness is satisfied only by adding to $F$ lower order terms.

In the scalar case, a possible choice  is the space $GBV_\star(A)$, introduced by Dal Maso and Toader  in \cite{DalToa22}. In \cite{DalToa23,DalToa22} the authors are able to adapt the arguments of  \cite{friedrich2019compactness} to prove  that for certain minimum problems with Dirichlet boundary conditions, minimising sequences $(u_n)_n\subset \GBVsscalar$ admit a modification $(y_n)_n\subset \GBVsscalar$ which is still minimising and which converges $\Ld$-a.e. to $y\in \GBVsscalar$. This compactness property is achieved without any additional control on lower order terms.

  Adopting this last point of view, the aim of the present work  is twofold. First, we address the problem of extending the definition of $\GBVsscalar$  to the vectorial case and describing the main features of   $GBV_\star(A;\Rk)$.
 Then, we show that the compactness results of \cite{DalToa23,DalToa22} can be extended to $GBV_\star(A;\Rk)$ and give some sufficient condition for lower semicontinuity of integral functionals whose domain is $GBV_\star(A;\Rk)$. 
 These results will be used in \cite{DalDon} to deal with the  homogenisation of  functionals of cohesive type in the vectorial setting . 

The structure of the paper is as follows. In Section 3 we introduce the space $\GBVsvector$ and we present some of its main properties.  We later study the relation between a  function $u\in GBV_\star(A;\Rk)$ and $\phi\circ u$, for $\phi$ a Lipschitz function with compact support. We show in Proposition \ref{prop:characterisation} that $u\in GBV_\star(A;\Rk)$ if and only if for every $\phi$ as above, the function $\phi\circ u $ belongs to $BV(A;\Rk)$ and some relevant quantities associated with the derivatives of $\phi\circ u$ are  controlled from above by a constant $M$, independent of $\phi$, and $\textup{Lip}(\phi)$, the Lipschitz constant of $\phi$. We then analyse in Proposition \ref{prop:properties of CantorPart} the relation between $D^c(\phi\circ u)$ and $D^cu $ and use this result in Corollary \ref{prop:Alberti in GBVs} to show that $D^cu$ satisfies the Rank-One property.

In Section 4 we investigate the lower semicontinuity of a class of functionals with respect to the convergence in measure. In Theorem \ref{thm:lowersemicontinuity}, we prove the lower semicontinuity of certain functionals depending on $\nabla u$ and whose domain is the space $\GBVsvector$. 

In the rest of the section we study the compactness properties of the space $GBV_\star(A;\Rk)$. We first show that the following compactness result (Theorem \ref{thm:Compactness naive}) holds: if $(u_n)_n\subset\GBVsvector$ is a minimising sequence for $F$ given by \eqref{eq:Functional Completo Intro} with $g$ satisfying \eqref{eq:Dugdale bound introduction} and $f$ with linear growth, and if 
\begin{equation}\label{eq:intro compactness}
    \sup_{n\in\mathbb{N}}\int_Ah(|u_n|)\, dx<+\infty,
\end{equation}
for some continuous increasing function $h $ with $h(t)\to+\infty $ for $t\to+\infty$, then there exists a subsequence of $(u_n)_n$ converging $\Ld$-a.e. to some 
function  $u\in\GBVsvector$.
 
 We conclude by showing (Theorem \ref{thm:Compactness}) that using the arguments of \cite[Theorem 5.5] {DalToa22} and of \cite[Theorem 7.13]{DalToa23}, for every $\e_n\to 0^+$ every sequence of functions $(u_n)_n\subset GBV_\star(A;\Rk)$ satisfying some fixed common Dirichlet boundary conditions and such that $\sup_{n\in\N}F(u_n)< +\infty$  admits a modification $(y_n)_n\subset \GBVsvector$, with $y_n=u_n$ on $\partial A$, such that $F(y_n)\leq F(u_n)+\e_n$  and satisfying  \eqref{eq:intro compactness} with $u_n$ replaced by $y_n$.

\section{Notation and preliminaries}
We fix some notation that will be used throughout the paper.

\begin{enumerate}
    \item [(a)]  Given $n\in\N$, the symbol $\cdot$ denotes the scalar product of $\R^n$ and $|\,\,|$ the Euclidean norm of $\R^n$. If $a\in\R^n$, $a_i$ is the $i$-th component of $a$. The unit sphere of $\R^n$ is denoted by  $\Sn^{n-1}:=\{x\in\R^n\colon |x|=1\}$. Given $x\in\R^n$ and $\rho>0$, the open ball of center $x$ and radius $\rho$ is denoted by $B_\rho(x)$.
    \item [(b)] 
    Given $k,d\in\N$, we identify vectors in $\R^{k\times d}$ with $k\times d$ matrices. Given a matrix $\xi\in \R^{k\times d}$ and a vector $x\in\R^d$, the vector $\xi x$ is defined via the usual matrix by vector product. Given $\xi=(\xi_{ij})\in\R^{k\times d}$ the Frobenius norm of $\xi$ is  defined by
    \begin{equation}\nonumber
       |\xi|= \Big(\sum_{i=1}^k\sum_{j=1}^d\xi_{ij}^2\Big)^{\frac12},
    \end{equation}
    while the operatorial norm of $\xi$ is defined by 
    \begin{equation}\nonumber
       |\xi|_{\rm op}:=\sup_{\nu\in\Sn^{
       d-1}}|\xi\nu|.
    \end{equation}
    \item [(c)] If $A\subset \R^d$ is an open set, $\mathcal{A}(A)$ is the collection of all open subsets of $A$. If $A',A''\in\mathcal{A}(A)$, $A'\subset\subset A''$ means that $A'$ is relatively compact in $A''$. The symbol  $\mathcal{B}(A)$ denotes the $\sigma$-algebra of all Borel subsets of $A$. 
    \item [(d)] If $A\in\mathcal{A}(\R^d)$, the space of all $\R^m$-valued bounded Radon measures on $A$ is denoted by $\M_b(A;\R^m)$, the indication of $\R^m$  being omitted if $m=1$. Given a positive $\lambda\in\M_b(A)$ and $\mu \in\M_b(A;\R^m)$, the Radon-Nikod\'ym derivative of $\mu$ with respect to $\lambda$ is denoted by $d\mu/d\lambda$. Given $\mu\in\M(A;\R^{k\times d})$ we denote by $|\mu|$ the total variation computed with respect to the Frobenius norm, while $|\mu|_{\rm op}$ is the total variation computed with respect to the operatorial norm, i.e., the measure defined for every $B\in\mathcal{B}(A)$ by
    \begin{equation*}
   |\mu|_{\rm op}(B):=\sup \sum_{i=1}^n|\mu(B_i)|_{\rm op},
   \end{equation*}
   where the supremum is taken all over $n\in\N$ and finite collections $(B_i)_{i=1}^n$ of pairwise disjoint  relatively compact subsets of  $B$. The $d$-dimensional Lebesgue measure is denoted by $\mathcal{L}^d$, while the $(d-1)$-dimensional Hausdorff measure is denoted by $\mathcal{H}^{d-1}$.
   \item [(e)] Given $A\in\A(\Rd)$, the space of all $\Ld$-measurable functions $u\colon A\to \Rk$ is denoted by $L^0(A;\Rk)$, the indication of $\Rk$ being omitted when $k=1$. We endow this space with the topology induced by the convergence in measure. With this choice of topology, the space $L^0(A;\Rk)$ is  metrisable and separable.
\item [(f)] For every $\Ld$-measurable set $E\subset \Rd$, $\chi_E\colon \Rd \to\R$ is the characteristic function of the set $E$, i.e., $\chi_E(x)=1$ if $x\in E$, $\chi_E(x)=0$ otherwise.
 If $E\subset \Rd$ is an $\Ld$-measurable set with locally finite perimeter (see \cite{maggi2012sets} for the general properties of such sets), we denote by $\partial^*E$ its reduced boundary. We recall that the perimeter of a set relative to $A\in\A (\Rd)$ is defined by 
 \begin{equation*}
     \textup{Per}(u,A):=\hd(\partial ^*E\cap A).
 \end{equation*}
\item [(g)]  If $E\subset \Rd$ is an $\Ld$-measurable set a point $x\in\Rd$ is a point with positive density, i.e., 
\begin{equation*}
    \limsup_{\rho\to 0^+}\frac{\Ld(E\cap B_\rho(x))}{\rho^d}>0,
\end{equation*}
and if 
$u\colon E \to \Rd$ is an an $\Ld$-measurable function, we say that $a\in \Rk$ is the approximate limit of $u$ at the  point $x$, in symbols 
\begin{equation*}
    \underset{\underset{y\in E}{y\to x}}{\text{ap}\lim \,}u(y)=a,
\end{equation*}
if for every $\varepsilon>0$ we have
\begin{equation*}
    \lim_{\rho\to 0^+}\frac{\Ld(\{|u-a|>\varepsilon\}\cap B_\rho(x))}{\rho^d}=0,
\end{equation*}
where $\{|u-a|>\e\}:=\{y\in E\colon |u(y)-a|>\e\}$. Given $A\subset \Rd$ open and $u\in L^0(A;\Rk)$, we denote by $S_u\subset A$ the complement in $A$ of the set of points $x\in A$ such that there exists 
\begin{equation}\nonumber\label{eq:precise representative}
    \widetilde{u}(x):=\underset{\underset{y\in A}{y\to x}}{\text{ap}\lim \,}u(y).
    \end{equation}
If $x\notin S(u)$, we say that $x$ is a point of approximate continuity for $u$ and that $u$ is approximately continuous at $x$.
\item [(l)] Given $A\in\mathcal{A}(\Rd)$  and $u\in L^0(A;\Rk)$  the symbol $\jump{u}$ denotes the jump set of $u$, that is, the set of  $x\in A$ such that 
there exists a triple $(u^+(x),u^-(x),\nu_u(x))\in\Rk\times\Rk\times\mathbb{S}^{d-1}$, with $u^+(x)\neq u^-(x)$, for which, setting 
\begin{equation}\nonumber \label{eq:hyperspaces}
    H^{\pm}_x:=\{y\in A\colon \, \pm(y-x)\cdot \nu_u(x)>0\},
\end{equation}
we have 
\begin{equation*}
      \underset{\underset{y\in H^+_x}{y\to x}}{\text{ap}\lim \,}u(y)=u^+(x)\text{ \,\,\, and }  \,\,\,\underset{\underset{y\in H^-_x}{y\to x}}{\text{ap}\lim \,}u(y)=u^-(x).
\end{equation*}
The triple $(u^+(x),u^-(x),\nu_u(x))$ is unique up to interchanging the roles of $u^+(x)$ and $u^-(x)$ and changing the sign of $\nu_u(x)$. It holds the inclusion $J_u\subset S_u$. Moreover, it was proved in \cite{DelNin} that if $u\in L^1_{\rm loc}$ then $J_u$ is a $(d-1)$-countably rectifiable set (see \cite[Definition 2.57]{AmbFuscPall}).

\item [(m)] Given $A\in\mathcal{A}(\Rd)$, the space of $\Rk$-valued functions with bounded variation on $A$  is denoted by $BV(A;\Rk)$. If $k=1$, we simply write $BV(A)$. We refer the reader to \cite{AmbFuscPall,Evans2015} for a complete introduction to such spaces. 
If $u\in BV(A; \Rk)$ then 
$\hd(S_u\setminus J_u)=0$,  $\jump{u}$ is a $(d-1)$-countably rectifiable set, and for $\hd$-a.e. $x\in\jump{u}$ the vector $\nu_u(x)$ is a measure theoretical unit normal to $\jump{u}$.  For every $x\in\jump{u}$  we set 
\[[u](x):=u^+(x)-u^-(x).\]
Note that a change in the sign $\nu_u(x)$ naturally implies a change of sign in $[u](x)$. 
 
 \item [(n)] If $u\in BV(A;\Rk)$, $Du$ denotes its distributional derivative, which is by definition an $\Rkd$-valued Radon measure. The measure $Du$ can be decomposed as

\[Du=\nabla u\Ld +D^cu+[u]\otimes\nu_u\hd\mres\jump{u},\]
where 
\begin{itemize}
    \item  $\nabla u\in L^1(A;\Rkd)$ is the approximate gradient of $u$, i.e., the unique $\Rkd$-valued function such that for $\Ld$-a.e $x\in A$ we have 
\begin{equation}\label{eq:Approximate Differentaibility}
    \underset{\underset{y\in A}{y\to x}}{\text{ap}\lim \,}\frac{u(y)-\widetilde{u}(x)-\nabla u(x)(y-x)}{|y-x|}=0,
\end{equation}
\item $D^cu$,  the Cantor part of $Du$, is  a measure vanishing on all $B\in\mathcal{B}(A)$ such that $\hd(B)<+\infty$ and which is singular with respect to $\Ld$,
    \item $\otimes$ is the tensor product defined by $(a\otimes b)_{ij}=a_ib_j$ for $a\in\Rk$, $b\in\Rd$, $\hd\mres\jump{u}$ is the restriction of $\hd$ to $J_u$, i.e., the measure on $A$ defined by $(\hd\mres J_u)(B):=\hd(B\cap \jump{u})$ for every $B\in\mathcal{B}(A)$; finally,  $[u]\otimes\nu_u\hd\mres\jump{u}$ is the measure whose density with respect to $\hd\mres\jump{u}$ is given by $[u]\otimes\nu_u$ .  
\end{itemize}
\end{enumerate}

We briefly recall the properties of slicing of functions in $\BVvect$ (see \cite[Section 3.12]{AmbFuscPall}). Given  $\nu\in \mathbb{S}^{d-1}$, the hyperplane through the origin orthogonal to $\nu$ is denoted by  $\Pi_\nu:=\{y\in\Rd:y\cdot\nu=0\}$. For every $y\in\Pi_\nu$, $B\in\mathcal{B}(A)$, we set $B^ \nu_y:=\{t\in\R:\, y+t\nu\in B\}$. For every $u\in \BVvect$, $\nu\in\mathbb{S}^{d-1}$, and  $y\in \Pi_\nu$, $u^\nu_y:A_y^\nu\to\Rk$ is the function defined by 
\begin{equation}\label{eq:slice}    
u^\nu_y(t):=u(y+t\nu)\quad \text{ for every $t\in A^\nu_y$}.
\end{equation}

The following proposition describes some of the well known connections between $BV$ functions and their one-dimensional slices (see \cite[Theorem 3.108]{AmbFuscPall}).

\begin{proposition}\label{prop:Slicing}
    Let $u\in\BVvect$ and let $\nu\in\mathbb{S}^{d-1}$. Then
    \begin{itemize}
        \item [(a)] for $\hd$-a.e $y\in\Pi_\nu$ we have $u^\nu_y\in BV(A^\nu_y;\Rk)$;
        \item [(b)] we have
        \begin{eqnarray*}
           &\displaystyle \nabla(u^\nu_y)=(\nabla u)^\nu_y \quad \text{for $\hd$-a.e. }y\in\Pi_\nu,\\
           &\displaystyle ((D^ju)\nu)(B)=\int_{\Pi_\nu}D^ju^\nu_y(B^\nu_y)\, d\hd(y),\\ 
           &\displaystyle ((D^cu)\nu)(B)=\int_{\Pi_\nu}D^cu^\nu_y(B^\nu_y)\, d\hd(y),
        \end{eqnarray*}
        for every $B\in\mathcal{B}(A)$;
         \item [(c)] if $u_n\rightharpoonup u$ weakly$^*$ in $\BVvect$, then for $\hd$-a.e. $y\in \Pi_\nu$ we have $(u_n)_y^\nu\rightharpoonup u^\nu_y$ weakly$^*$ in $BV(A_y^\nu;\Rk)$.
    \end{itemize}
\end{proposition}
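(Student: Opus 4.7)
The plan is to follow the standard disintegration scheme for $BV$ functions, essentially reproducing the strategy of \cite[Theorem 3.108]{AmbFuscPall}. Since all three statements are linear in $u$, I would first reduce to the scalar case $k=1$ by treating each component separately, as both slicing and the Lebesgue--Cantor--jump decomposition act componentwise.

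For part (a), the starting point is the disintegration identity $|Du\cdot\nu|(B)=\int_{\Pi_\nu}|Du_y^\nu|(B_y^\nu)\,d\hd(y)$ for every $B\in\mathcal{B}(A)$, which follows by testing the distributional derivative against tensor products $\psi(y)\phi(t)$ with $x=y+t\nu$ and applying Fubini. Taking $B=A$ shows that $y\mapsto |Du_y^\nu|(A_y^\nu)$ is $\hd$-integrable on $\Pi_\nu$, hence finite for $\hd$-a.e.\ $y$. Combined with $u_y^\nu\in L^1(A_y^\nu;\Rk)$ for $\hd$-a.e.\ $y$ (Fubini on $u$ itself), this yields $u_y^\nu\in BV(A_y^\nu;\Rk)$.

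For part (b), the identity $\nabla(u_y^\nu)=(\nabla u)_y^\nu$ on $\hd$-a.e.\ slice follows from the approximate differentiability relation \eqref{eq:Approximate Differentaibility}: restricting the approximate limit to lines $y+t\nu$ and using Fubini to control the exceptional set gives one-dimensional approximate differentiability of $u_y^\nu$ with the expected derivative. For the jump and Cantor decompositions I would split the disintegrated measure according to $J_u$ and its complement. The key measure-theoretic input is $D^ju\cdot\nu=([u]\cdot\nu)\,\hd\mres J_u$; by rectifiability of $J_u$ and the area formula, the trace of $J_u$ on $\hd$-a.e.\ line is exactly $J_{u_y^\nu}$, which yields the jump identity. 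The Cantor identity then follows by difference, once one checks that the remaining slice-measure is singular with respect to $\mathcal{L}^1$ and vanishes on finite sets---properties preserved under the disintegration.

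For part (c), weak$^*$ convergence $u_n\rightharpoonup u$ in $\BVvect$ implies $u_n\to u$ in $L^1(A;\Rk)$, so by Fubini, up to a subsequence, $(u_n)_y^\nu\to u_y^\nu$ in $L^1(A_y^\nu;\Rk)$ for $\hd$-a.e.\ $y$. The total variations $|D(u_n)_y^\nu|(A_y^\nu)$ are uniformly bounded on average (via the disintegration of part (a) together with Fatou's lemma), so a diagonal extraction gives weak$^*$ convergence in $BV(A_y^\nu;\Rk)$ for $\hd$-a.e.\ $y$; the Urysohn subsequence principle then upgrades this to the full sequence, since the limit is uniquely determined. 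I expect the main obstacle to be the clean identification of the jump and Cantor parts in (b), which requires careful handling of null sets in $\Pi_\nu$ and the precise statement that the slicing of the singular part of $Du\cdot\nu$ matches the singular part of $Du_y^\nu$ for $\hd$-a.e.\ $y$.
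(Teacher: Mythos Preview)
The paper does not prove this proposition: it is stated among the preliminaries with a bare reference to \cite[Theorem 3.108]{AmbFuscPall} and no argument is given. Your sketch is exactly the outline of that classical proof, so there is nothing in the paper to compare against---the authors simply take the result for granted.
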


We recall the chain rule in $\BVvect$ for compositions with smooth functions (see \cite[Theorem 3.99]{AmbFuscPall}). For the rest of the section $A\subset\Rd$ will always be a bounded open set.

\begin{theorem}[Chain Rule in $\BVvect$]\label{thm:Chain Rule}
    Let $\varphi\in C^1(\Rk;\R^n)$ and $u\in \BVvect$. Then, setting $v:=\varphi\circ u$, we have $v\in BV(A;\R^n)$ and
    \begin{equation}\label{eq: Chain Rule for BV}
    Dv=\nabla \varphi(u) \nabla u\Ld+\nabla\varphi(\widetilde{u})D^cu+[\varphi(u)]\otimes\nu_u\hd\mres\jump{\phi\circ u}
    \end{equation}
    as Radon measures.
\end{theorem}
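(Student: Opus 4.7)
The plan is to establish $v\in BV(A;\R^n)$ and then verify the claimed identity by decomposing $Dv$ according to the standard splitting of the derivative of a $BV$ function, identifying the three mutually singular summands separately.

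First I would reduce to the case in which $\nabla\varphi$ is bounded. Multiplying $\varphi$ by a smooth cutoff supported in $B_R(0)\subset\Rk$ produces a globally Lipschitz $\varphi_R$; the composition $v_R=\varphi_R\circ u$ agrees with $v$ on $\{|u|<R\}$, and one recovers the full statement by letting $R\to+\infty$ using the fact that $\Ld(\{|u|=+\infty\})=0$ and that the three candidate measures are mutually singular and stable under this truncation. Once $\varphi$ is assumed globally Lipschitz, the bound $|Dv|(A)\leq \mathrm{Lip}(\varphi)\,|Du|(A)$ follows by mollifying $u_\e:=u*\rho_\e$, applying the classical smooth chain rule to $\varphi\circ u_\e$, and passing to the weak$^*$ limit. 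This shows $v\in BV(A;\R^n)$.

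For the three summands: at $\Ld$-a.e.\ $x$ the function $u$ is approximately differentiable in the sense of \eqref{eq:Approximate Differentaibility}, so composing with the differentiable map $\varphi$ yields that $v$ is approximately differentiable at $x$ with approximate gradient $\nabla\varphi(u(x))\nabla u(x)$, accounting for the absolutely continuous summand. At $\hd$-a.e.\ $x\in \jump{u}$ the one-sided approximate limits of $u$ along the half-spaces $H_x^{\pm}$ exist and equal $u^{\pm}(x)$, and the continuity of $\varphi$ transfers this jump structure to $v$ with the same normal $\nu_u(x)$ and amplitude $\varphi(u^+)-\varphi(u^-)=[\varphi(u)]$, accounting for the third summand. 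By uniqueness of the Radon--Nikod\'ym decomposition, it then suffices to prove the identity $D^c v=\nabla\varphi(\widetilde u)\,D^c u$ for the Cantor part.

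To identify $D^c v$ I would use Proposition \ref{prop:Slicing}. For every $\nu\in\mathbb{S}^{d-1}$, the slicing formula
\begin{equation*}
((D^cv)\nu)(B)=\int_{\Pi_\nu}D^c v_y^\nu(B_y^\nu)\,d\hd(y)
\end{equation*}
reduces the problem to the one-dimensional chain rule in $BV(A_y^\nu;\Rk)$, which reads $D^c(\varphi\circ w)=\nabla\varphi(\widetilde w)D^c w$. Combining Proposition \ref{prop:Slicing}(a)--(b) with the facts that $(\varphi\circ u)^\nu_y=\varphi\circ u^\nu_y$ and that $\widetilde u(y+t\nu)=\widetilde{u^\nu_y}(t)$ for $\hd$-a.e.\ $y$ and $|D^cu^\nu_y|$-a.e.\ $t$, one obtains that $(D^cv)\nu=\nabla\varphi(\widetilde u)(D^cu)\nu$ for every $\nu$; taking $\nu=e_1,\dots,e_d$ pins down the full matrix-valued measure $D^cv$.

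The main obstacle is the Cantor part. Two technical points require care: first, the one-dimensional chain rule demands that $\nabla\varphi$ be evaluated at the precise representative $\widetilde w$, which is only $|D^cw|$-a.e.\ well defined and whose Borel-measurability under composition with $\varphi$ needs to be checked; second, transferring the slice-wise identities to the multidimensional statement requires the compatibility between the precise representative of $u$ and that of its slices $u^\nu_y$ for $\hd$-a.e.\ $y$, which follows from a Fubini-type argument combined with the fact that $|D^cu|$ vanishes on $\hd$-finite sets.
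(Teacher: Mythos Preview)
The paper does not supply a proof of this statement: it is listed among the preliminaries with the citation ``(see \cite[Theorem 3.99]{AmbFuscPall})'' and no argument is given. There is therefore nothing in the paper to compare your sketch against; your outline---mollification for membership in $BV$, pointwise identification of the absolutely continuous and jump parts, and slicing for the Cantor part---is the standard route taken in that reference.

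One genuine issue with your reduction step, however: as stated, the theorem asks for the conclusion for \emph{every} $\varphi\in C^1(\Rk;\R^n)$, with no bound on $\nabla\varphi$. In that generality the assertion $v\in BV(A;\R^n)$ is false: for $d\ge 2$ take an unbounded $u\in W^{1,1}(A)\subset BV(A)$ and $\varphi(t)=e^t$, so that $\varphi\circ u\notin L^1(A)$. Your cutoff $\varphi_R$ gives the correct formula on $\{|u|<R\}$, but passing $R\to+\infty$ cannot manufacture a finite total variation if none exists; the appeal to ``$\Ld(\{|u|=+\infty\})=0$'' does not close this gap. In the paper this is harmless because the chain rule is only ever invoked with $\phi\in C^1_c$, hence globally Lipschitz, which is the hypothesis under which the cited result is actually proved.
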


 Dal Maso and Toader introduced in \cite{DalToa22} the space $\GBVsscalar$, a subspace of Ambrosio's $GBV(A)$ (see \cite[Section 4.5]{AmbFuscPall}). For the reader's convenience 
 we briefly recall the definition of the spaces $GBV(A)$ and $GBV_\star(A)$ and some relevant properties of such spaces. For every $t\in\R$ and $m>0$, we denote by $\trunc{t}{m}:=(t\land m)\lor (-m)$, where for $a,b\in\R$, the symbols $a\land b$ and $a\lor b$ denote the minimum between $a$ and $b$ and the maximum between $a$ and $b$, respectively. 

\begin{definition}
   Let $u\in L^0(A)$. Then
   \begin{itemize}
        \item[(a)]$u\in GBV(A)$ if and only if
    $u^{(m)}\in BV_{\rm loc}(A)$ for every $m>0$;
    \item [(b)] $u\in \GBVsscalar$ if and only if  $u^{(m)}\in BV(A)$ for every $m>0$ and 
    there exists $M>0$ such that
\begin{equation}\label{eq:boundsinglecomponent}
\sup_{m>0}\int_A|\nabla\trunc{u}{m}|\, dx+|D^c\trunc{u}{m}|(A)+\int_{{J}_{\trunc{u}{m}}}|[\trunc{u}{m}]|\land 1\,d\mathcal{H}^{d-1}\leq M.
\end{equation}
\end{itemize}
\end{definition}

The following proposition collects the main properties of the spaces $GBV(A)$ and $GBV_\star(A)$.
\begin{proposition}\label{prop: Properties of GBVs scalar}
    Let $u\in GBV(A)$. Then
    \begin{itemize}
        \item [(a)] for $\hd$-a.e $x\in A\setmeno\jump{u}$ there exists finite
        \[\widetilde{u}(x):=\underset{y\to x}{\textup{ap}\lim \,}u(y);\]
        moreover $u^+$ and $u^-$ are finite for $\hd$-a.e. $x\in J_u;$
        \item [(b)] $u$ is approximately differentiable, i.e., there exists a Borel function $\nabla u: A\to\Rd$ such that formula \eqref{eq:Approximate Differentaibility} holds for $\Ld$-a.e. $x\in A$; moreover, for every $m>0$ we have
        \[\nabla u(x)=\nabla \trunc{u}{m}(x)\quad \text{for $\Ld$-a.e. }x\in \{x\in A\colon |u(x)|\leq m\};\]
        \item [(c)] 
        there exists a unique positive measure $|D^cu|\in\mathcal{M}_b(A)$ such that
        \[
        |D^cu|(B)=\sup_{m>0}|D^cu^{(m)}|(B)\quad \text{for every }B\in\mathcal{B}(A);
        \]
        \item [(d)] for every $m>0$ we have $\jump{\trunc{u}{m}}\subset\jump{u}$ up to an $\hd$-negligible set and $|[\trunc{u}{m}]|\leq|[u]|$ $\hd$-a.e. in $\jump{\trunc{u}{m}}\cap\jump{u}$. Moreover, for $\hd$-a.e. $x$ in $\jump{u}$ there exists $m_x\in\mathbb{N}$ such that $x\in \jump{u^{(m)}}$ for every $m\in\mathbb{N}$ with $m\geq m_x$, and $[u^{(m)}](x)\to [u](x)$ as $m\rightarrow+\infty$.
        
          \end{itemize}
\vspace{0.2 cm}
          \noindent If, in addition, $u\in \GBVsscalar$, then 
        \begin{itemize}
            \item [(e)] there exists a unique Radon measure $D^cu\in\mathcal{M}_b(A;\Rd)$ such that for every $m>0$ we have $D^cu(B)=D^c\trunc{u}{m}(B)$ for every $B\subset\{x\in A\colon \, \widetilde{u}(x) \text{ exists and }|\widetilde{u}(x)|\leq m\}$ and $D^cu(B)=0$ for every $B\in\mathcal{B}(A)$ with $\hd(B\setmeno\jump{u})=0;$ moreover, for every $B\in\mathcal{B}(A)$ we have
        \begin{eqnarray} \nonumber \label{eq: Definition of Cantor for GBVs}
       & \displaystyle D^c(B)=\lim_{m\to \infty} D^c\trunc{u}{m}(B),\\\nonumber 
       & \label{eq:Definition of Total Variation of the cantor part in GBV}\displaystyle |D^cu|(B)=\sup_{m>0}|D^cu^{(m)}|(B).
        \end{eqnarray}
  \end{itemize}
\end{proposition}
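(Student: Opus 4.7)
Parts (a)--(d) are essentially the standard $GBV$ fine properties from \cite[Section 4.5]{AmbFuscPall}; the unifying strategy is that each truncation $\trunc{u}{m}$ lies in $BV_{\mathrm{loc}}(A)$, so it enjoys the classical $BV$ fine properties, and on the set $\{|u|<m\}$ one has $u=\trunc{u}{m}$, allowing one to transfer properties from $\trunc{u}{m}$ to $u$ and then take a countable union or limit in $m$. For (a), I would let $N_m$ be the $\hd$-negligible set where $\widetilde{\trunc{u}{m}}$ fails to exist off $\jump{\trunc{u}{m}}$, and observe that at every $x\notin \bigcup_m N_m$ for which $|\widetilde{\trunc{u}{m}}(x)|<m$ for some $m$, the set $\{|u|\geq m\}$ has density zero at $x$, forcing $\widetilde{u}(x)=\widetilde{\trunc{u}{m}}(x)$. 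Part (b) is handled by declaring $\nabla u:=\nabla \trunc{u}{m}$ on $\{|u|<m\}$ and checking consistency via $\nabla \trunc{u}{m}=\nabla \trunc{u}{m'}$ $\Ld$-a.e.\ on the common sublevel set.

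For (c) and (d), the key tool is the chain rule (Theorem \ref{thm:Chain Rule}) applied to $\trunc{u}{m'}\in BV(A)$ with the $1$-Lipschitz truncation $\phi_m(t):=\trunc{t}{m}$: since $\phi_m\circ \trunc{u}{m'}=\trunc{u}{m}$ for $m<m'$, this yields
\[
D^c\trunc{u}{m}=\chi_{\{|\widetilde{\trunc{u}{m'}}|<m\}}\, D^c\trunc{u}{m'},
\]
together with an analogous identity for the jump part. Monotonicity of $|D^c\trunc{u}{m}|$ in $m$ is then immediate, so $|D^cu|(B):=\sup_m |D^c\trunc{u}{m}|(B)$ defines a Borel measure by Beppo Levi, while the jump identity yields $\jump{\trunc{u}{m}}\subset \jump{u}$ up to $\hd$-null sets and the pointwise convergence $[\trunc{u}{m}](x)\to [u](x)$ for $\hd$-a.e.\ $x\in \jump{u}$, once (a) provides the $\hd$-a.e.\ finiteness of $u^\pm$ on $\jump{u}$.

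The new content is (e). Under $u\in \GBVsscalar$, the uniform bound in \eqref{eq:boundsinglecomponent} forces $|D^cu|$ to be a bounded measure with $|D^cu|(A)\leq M$. To construct the vector-valued $D^cu\in \M_b(A;\Rd)$, I would show that $(D^c\trunc{u}{m})_m$ is Cauchy in total variation: for $m<m'$, the chain-rule identity above and its complement give
\[
|D^c\trunc{u}{m'}-D^c\trunc{u}{m}|(A)=|D^c\trunc{u}{m'}|(A)-|D^c\trunc{u}{m}|(A),
\]
which tends to zero by monotone convergence of the scalar total variations to $|D^cu|(A)$. The limit $D^cu$ agrees with $D^c\trunc{u}{m}$ on Borel subsets of $\{x\in A\setmeno \jump{u}\colon\ \widetilde{u}(x)\text{ exists},\ |\widetilde{u}(x)|\leq m\}$ by the same chain-rule identity, and vanishes on Borel sets $B$ with $\hd(B\setmeno \jump{u})=0$ since each $D^c\trunc{u}{m}$ does; these two properties also give uniqueness, and the formula $D^cu(B)=\lim_m D^c\trunc{u}{m}(B)$ is then just the total-variation convergence evaluated on each Borel set.

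The main obstacle I anticipate is the $\hd$-bookkeeping in (a) and (d): in (a) one must verify that the exceptional set where no truncated approximate limit with $|\widetilde{\trunc{u}{m}}(x)|<m$ ever occurs is $\hd$-null and not merely $\Ld$-null; in (d), the analogous inclusion $\jump{\trunc{u}{m}}\subset \jump{u}$ requires handling points $x\in \jump{\trunc{u}{m}}$ where one of the one-sided limits of $\trunc{u}{m}$ saturates at $\pm m$, which is resolved by iterating the argument at truncation levels $m'>m$ and invoking (a) to rule out divergent one-sided limits $\hd$-a.e.
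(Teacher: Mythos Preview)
The paper does not prove this proposition; it simply cites \cite[Theorem 4.34]{AmbFuscPall} for (a)--(d) and \cite[Theorem 2.7, Propositions 2.9, 3.3]{DalToa22} for (e). Your outline is correct and is essentially the argument found in those references: the reduction to truncations for (a)--(d), and for (e) the chain-rule identity $D^c\trunc{u}{m}=\chi_{\{|\widetilde{\trunc{u}{m'}}|<m\}}D^c\trunc{u}{m'}$ leading to the total-variation Cauchy estimate $|D^c\trunc{u}{m'}-D^c\trunc{u}{m}|(A)=|D^c\trunc{u}{m'}|(A)-|D^c\trunc{u}{m}|(A)$ is exactly the mechanism behind \cite[Proposition 3.3]{DalToa22}.

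One minor technical remark: the chain rule stated in the paper (Theorem~\ref{thm:Chain Rule}) is for $C^1$ maps, whereas the truncation $\phi_m(t)=\trunc{t}{m}$ is only Lipschitz. You should therefore either appeal directly to the scalar Lipschitz chain rule (e.g.\ \cite[Theorem 3.96]{AmbFuscPall}), which gives $D^c(\phi_m\circ v)=\phi_m'(\widetilde v)D^c v$ with $\phi_m'=\chi_{(-m,m)}$, or approximate $\phi_m$ by $C^1$ functions and pass to the limit. This does not affect the substance of your argument.
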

For the proofs of (a)-(d) we refer the reader to \cite[Theorem 4.34]{AmbFuscPall}, while for the proof of (e) we refer to \cite[Theorem 2.7, Propositions 2.9, 3.3]{DalToa22}.

We conclude the current section recalling the vector space properties of  $\GBVsscalar$.

\begin{proposition}[{\cite[Theorem 3.9]{DalToa22}}]\label{prop:GBVsscalar is a v.space }
    $\GBVsscalar$ is a vector space.  Moreover, for every $u,v\in \GBVsscalar$ and  $\lambda\in \R$ we have
\begin{eqnarray}
&\label{eq: nabla somme scalare}\displaystyle\nabla(u+v)=\nabla u+\nabla v, \,\,\, \nabla(\lambda u)=\lambda \nabla u \,\,\, \Ld\text{-$a.e. $ in } A,\\
    &\displaystyle \label{eq:cantor somme scalare}D^c(u+v)=D^cu+D^cv, \,\,\,D^c(\lambda u)=\lambda D^cu \,\,\, \text{ on }A,  \\
    &\displaystyle\label{De Gamma Somme scalare}[u+v]=[u]+[v]\,\,\text{ and }\,\,[\lambda u]=\lambda [u] \,\,\, \text{ $\hd$-a.e. in  }A.
\end{eqnarray}
\end{proposition}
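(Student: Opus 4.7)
The plan is to reduce both closure properties to manipulations of the truncations $\trunc{\cdot}{m}$, combined with the chain rule for $BV$ functions (Theorem \ref{thm:Chain Rule}) and the GBV structure given by Proposition \ref{prop: Properties of GBVs scalar}. Scalar multiplication is essentially a rescaling: for $\lambda \neq 0$, the identity $\trunc{(\lambda u)}{m} = \lambda\, \trunc{u}{m/|\lambda|}$ transfers the $BV$ structure of $\trunc{u}{m/|\lambda|}$ to $\trunc{(\lambda u)}{m}$, and formulas \eqref{eq: nabla somme scalare}--\eqref{De Gamma Somme scalare} for $\lambda u$ follow from the corresponding identities in $BV$. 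The uniform bound \eqref{eq:boundsinglecomponent} needs a short remark, since $s\mapsto s\land 1$ is not homogeneous; the inequality $|\lambda|s\land 1\leq (1\lor|\lambda|)(s\land 1)$ for $s\geq 0$ absorbs the non-homogeneity into the constant.

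For addition, set $w:=u+v$, $u_N:=\trunc{u}{N}$, $v_N:=\trunc{v}{N}$, $w_N:=u_N+v_N\in BV(A)$, and $\phi_m(t):=(t\land m)\lor(-m)$, a $1$-Lipschitz function with $\phi_m(0)=0$. The first step is to show $\trunc{w}{m}=\phi_m\circ w\in BV(A)$ by approximation: $\phi_m(w_N)\to \phi_m(w)$ in $L^1(A)$ by dominated convergence (since $|\phi_m|\leq m$ and $A$ is bounded), so by lower semicontinuity of the total variation it suffices to bound $|D\phi_m(w_N)|(A)$ uniformly in $N$. Applying the chain rule to $\phi_m\circ w_N$, the absolutely continuous part is controlled by $\int_A|\nabla u|+|\nabla v|\leq 2M$, the Cantor part by $|D^cu_N|(A)+|D^cv_N|(A)\leq 2M$, and the jump part—using $|[\phi_m(w_N)]|\leq (|[u_N]|+|[v_N]|)\land (2m)$ together with $s\land(2m)\leq 2m(s\land 1)$ for $m\geq \tfrac12$—by $4mM$.

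With $\trunc{w}{m}\in BV(A)$ secured, the identities \eqref{eq: nabla somme scalare}--\eqref{De Gamma Somme scalare} follow from the corresponding pointwise properties: \eqref{eq: nabla somme scalare} is the additivity of the approximate gradient at common points of approximate differentiability (Proposition \ref{prop: Properties of GBVs scalar}(b)), and \eqref{De Gamma Somme scalare} comes from additivity of one-sided approximate limits, after noting that $\hd(J_w\setminus (J_u\cup J_v))=0$. The bound \eqref{eq:boundsinglecomponent} for $w$ is then verified termwise: the AC integral is $\leq 2M$ by \eqref{eq: nabla somme scalare}, the Cantor integral is $\leq |D^cu|(A)+|D^cv|(A)\leq 2M$ by \eqref{eq:cantor somme scalare} and Proposition \ref{prop: Properties of GBVs scalar}(c), and the jump integral is $\leq 2M$ using \eqref{De Gamma Somme scalare} together with the subadditivity $(a+b)\land 1\leq (a\land 1)+(b\land 1)$ for $a,b\geq 0$.

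The main technical obstacle is the Cantor identity \eqref{eq:cantor somme scalare}, because $D^cu$ for $u\in\GBVsscalar$ is defined only through the characterization in Proposition \ref{prop: Properties of GBVs scalar}(e) and not via weak$^*$ limits of $D^c\trunc{u}{m}$, which would not preserve the $AC$/Cantor/jump splitting. The plan is to verify the equality on the two kinds of test sets appearing in Proposition \ref{prop: Properties of GBVs scalar}(e): on Borel sets $B\subset\{\widetilde u,\widetilde v\text{ exist with }|\widetilde u|,|\widetilde v|\leq m\}$ one has $\trunc{w}{2m}=\trunc{u}{m}+\trunc{v}{m}$ $\Ld$-a.e., so $BV$-linearity $D^c(\trunc{u}{m}+\trunc{v}{m}) = D^c\trunc{u}{m}+D^c\trunc{v}{m}$ and Proposition \ref{prop: Properties of GBVs scalar}(e) yield $D^cw(B)=D^cu(B)+D^cv(B)$; on $B$ with $\hd(B\setminus J_w)=0$ both sides vanish by the same proposition. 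These two cases together determine the measure $D^cw$ and match it with $D^cu+D^cv$.
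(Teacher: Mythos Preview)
The paper does not prove this proposition: it is quoted from \cite[Theorem~3.9]{DalToa22} in the preliminaries, so there is no in-paper argument to compare against. Your outline follows the natural strategy, and the parts concerning scalar multiples, the approximate gradient, and the jump set are sound.

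There are, however, two linked gaps in the additive Cantor argument. First, a circularity: you verify the defining bound \eqref{eq:boundsinglecomponent} for $w=u+v$ by invoking \eqref{eq:cantor somme scalare}, but the measure $D^cw$ appearing there is only defined through Proposition~\ref{prop: Properties of GBVs scalar}(e), which already presupposes $w\in\GBVsscalar$. You must bound $\sup_m|D^c\trunc{w}{m}|(A)$ \emph{directly} before you are entitled to speak of $D^cw$. Second, the claim ``$\trunc{w}{2m}=\trunc{u}{m}+\trunc{v}{m}$ $\Ld$-a.e.'' is false globally (take $u\equiv2$, $v\equiv\tfrac12$, $m=1$: the left side is $2$, the right side $\tfrac32$); it holds only on $\{|u|\le m,\,|v|\le m\}$. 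To turn this into an equality of Cantor parts on $B\subset\{|\widetilde u|\le m,\,|\widetilde v|\le m\}$ you need the locality of the diffuse derivative in $BV$: if $f,g\in BV(A)$ agree $\Ld$-a.e.\ on a set $E$, then $D^cf=D^cg$ on the density-one points of $E$ (this follows from $|D^ch|(\{\tilde h=0\})=0$ applied to $h=f-g$). Once you insert this locality lemma, it also yields the direct bound $|D^c\trunc{w}{m}|(A)\le|D^cu|(A)+|D^cv|(A)$ and dissolves the circularity.
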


\section{The space \texorpdfstring{$\GBVsvector$}{e}}

In this section we give the Definition of the space $GBV_\star(A;\Rk)$, and discuss some of its properties. 

Throughout the rest of the paper $A\subset\Rd$ will be a bounded open set.
\begin{definition}\label{def:GBVstar}
   Let $u\in\! L^0(A;\Rk)$. We say
    $u\in GBV_\star(A;\Rk)$ if  $u_i\in GBV_\star(A)$ for every $i=1,...,k$. 
\end{definition}
\begin{remark}
   The space $GBV(A;\Rk)$ of generalised functions of  bounded variation contains the space $GBV(A)^k$ (strictly if $k>1$, see \cite[Remark 4.27]{AmbFuscPall}) of $\Rk$-valued functions whose components are in  $GBV(A)$. It is easy to see that $GBV_\star(A;\Rk)\subset GBV(A)^k\subset GBV(A;\Rk)$, the inclusions being in general strict.
\end{remark}

It follows immediately from Proposition \ref{prop:GBVsscalar is a v.space } that $\GBVsvector$ is a vector space.

\begin{proposition}\label{prop:GBVsvector is a vector space}
  $GBV_\star(A;\Rk)$ is a vector space.
\end{proposition}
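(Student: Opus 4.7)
The plan is to reduce the statement immediately to the scalar case, which is already known by Proposition \ref{prop:GBVsscalar is a v.space }. Since $GBV_\star(A;\Rk)$ has been defined componentwise in Definition \ref{def:GBVstar}, and since both the sum of functions in $L^0(A;\Rk)$ and the scalar multiplication act componentwise, the argument is essentially tautological and there is no real obstacle.

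Concretely, I would fix $u,v\in GBV_\star(A;\Rk)$ and $\lambda\in\R$ and verify that $u+v$ and $\lambda u$ belong to $GBV_\star(A;\Rk)$. By Definition \ref{def:GBVstar}, for every $i\in\{1,\dots,k\}$ the components $u_i$ and $v_i$ belong to $GBV_\star(A)$. Using that the $i$-th component of the sum in $L^0(A;\Rk)$ is $(u+v)_i=u_i+v_i$, and the $i$-th component of the scalar multiple is $(\lambda u)_i=\lambda u_i$, I would apply Proposition \ref{prop:GBVsscalar is a v.space } to the pair $u_i,v_i$ and to $u_i$, respectively, to deduce that $u_i+v_i\in GBV_\star(A)$ and $\lambda u_i\in GBV_\star(A)$ for every $i$.

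Applying Definition \ref{def:GBVstar} again in the reverse direction, from the componentwise membership I would conclude that $u+v\in GBV_\star(A;\Rk)$ and $\lambda u \in GBV_\star(A;\Rk)$. Since the vector space axioms (associativity, commutativity, existence of zero and additive inverses, distributivity) are inherited pointwise from $\Rk$, this is enough to conclude that $GBV_\star(A;\Rk)$ is a vector subspace of $L^0(A;\Rk)$. The only point worth emphasising in the write-up is the compatibility between componentwise sum and the scalar identities \eqref{eq: nabla somme scalare}--\eqref{De Gamma Somme scalare}, which will be used repeatedly in the sequel; the proof itself requires no new estimate.
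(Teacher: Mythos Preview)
Your proposal is correct and matches the paper's approach exactly: the paper simply observes that the statement follows immediately from Proposition~\ref{prop:GBVsscalar is a v.space } together with the componentwise Definition~\ref{def:GBVstar}, which is precisely what you do.
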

If $m>0$ and $u\in L^0(A;\Rk)$, we set $u^{(m)}:=((u_1\land m)\lor(-m),...,(u_k\land m)\lor(-m))$.

\begin{definition}\label{def:Cantor part}
    Let $u\in \GBVsvector$. The measure $D^cu\in \mathcal{M}_b(A,\Rkd)$ is the matrix-valued measure whose $i$-th row is defined for every $B\in\mathcal{B}(A)$ by 
    \[(D^cu(B))_i:= D^cu_i(B).\]
\end{definition}
The measure $D^cu$ enjoys the same properties 
of its scalar counterpart (see Proposition \ref{prop: Properties of GBVs scalar}(e)).
\begin{lemma}\label{lemma:Dcu is a sup} 
    Let $u\in\GBVsvector$. Then 
    \begin{itemize}
        \item[(a)] for every $m>0$  we have $D^cu(B)=D^c\trunc{u}{m}(B)$ for every $B\subset\{x\in A\colon \, \widetilde{u}(x) \text{ exists and }|\widetilde{u}(x)|\leq m\};$
        \item[(b)] $D^cu(B)=0$  for every $B\in\mathcal{B}(A)$ with $\hd(B\setmeno\jump{u})=0$;
        \item [(c)] setting $\widetilde{A}:=\{x\in A:\widetilde{u}(x) \text{ exists}\}$, we have $\hd( \widetilde{A}\setminus J_u)=0.$
    \end{itemize}
    Moreover,
    \begin{eqnarray}
    &\displaystyle\label{Cantor derivative vector}
    D^cu(B)=\lim_{m\to+\infty} D^cu^{(m)}(B),\\\label{eq:Total Variation Vectorial}
    &\displaystyle |D^cu|(B)=\sup_{m>0}|D^cu^{(m)}|(B),
    \end{eqnarray}
    for every $B\in\mathcal{B}(A).$
\end{lemma}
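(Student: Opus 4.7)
My plan is to reduce each of the four assertions to the scalar analogue in Proposition \ref{prop: Properties of GBVs scalar}(e), exploiting the componentwise nature of both Definition \ref{def:Cantor part} and of truncation (the identity $(\trunc{u}{m})_i = \trunc{u_i}{m}$). For the first claim, since $\widetilde{u}$ exists at $x$ exactly when every $\widetilde{u_i}$ exists (with $(\widetilde{u})_i = \widetilde{u_i}$), the inclusion $\{|\widetilde{u}|\leq m\}\subset \{|\widetilde{u_i}|\leq m\}$ lets me apply the scalar statement component-by-component. For the vanishing claim, I would use that $J_u = \bigcup_i J_{u_i}$ up to $\hd$-null sets, so that $J_u$ is $\hd$-$\sigma$-finite; hence $\hd(B\setminus J_u)=0$ forces $B$ itself to be $\hd$-$\sigma$-finite, which suffices for the scalar vanishing to apply. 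The limit formula \eqref{Cantor derivative vector} is just the componentwise scalar limit.

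The main work is for the total variation identity \eqref{eq:Total Variation Vectorial}, which I would split into two inequalities. For the direction $|D^cu|(B)\leq\sup_m|D^c\trunc{u}{m}|(B)$, the first assertion of the lemma gives $|D^c\trunc{u}{m}|(B)\geq |D^c\trunc{u}{m}|(B\cap\{|\widetilde{u}|\leq m\}) = |D^cu|(B\cap\{|\widetilde{u}|\leq m\})$, and monotone convergence as $m\to\infty$ delivers $|D^cu|(B)$, since $D^cu$ is concentrated on the set where $\widetilde{u}$ exists and is finite (again by the scalar case applied componentwise).

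For the reverse inequality I would argue by Radon--Nikodym with respect to $|D^cu|$. From $|D^cu_i|(B)\leq |D^cu|(B)$, a consequence of $|\xi_i|\leq |\xi|_F$ for the rows of a matrix, each $D^cu_i$ is absolutely continuous with respect to $|D^cu|$, with a row-density $f_i\in L^1(A,|D^cu|;\Rd)$; the resulting $\Rkd$-valued density $f$ then satisfies $|f|_F = 1$ at $|D^cu|$-a.e.\ point. Applying the chain rule of Theorem \ref{thm:Chain Rule} to a smooth approximation of the $1$-Lipschitz truncation $t\mapsto \trunc{t}{m}$, and then passing to the $m'\to\infty$ limit using \eqref{Cantor derivative vector}, one gets the scalar identity $D^c\trunc{u_i}{m} = D^cu_i\mres\{|\widetilde{u_i}|\leq m\}$; stacking the rows yields
\[
D^c\trunc{u}{m}(B)=\int_B N_m(x)\,f(x)\, d|D^cu|(x),
\]
where $N_m(x)\in\R^{k\times k}$ is the diagonal matrix whose $i$-th diagonal entry is $\chi_{\{|\widetilde{u_i}|\leq m\}}(x)$. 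Since $|N_m|_{\rm op}\leq 1$, the Frobenius estimate $|N_m f|_F\leq |f|_F = 1$ holds $|D^cu|$-a.e., whence $|D^c\trunc{u}{m}|(B)\leq |D^cu|(B)$.

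The main obstacle is precisely this last step: unlike in the scalar case, $D^c\trunc{u}{m}$ is \emph{not} simply the restriction of $D^cu$ to a single set (each row is restricted to its own set $\{|\widetilde{u_i}|\leq m\}$), so one cannot conclude $|D^c\trunc{u}{m}|\leq|D^cu|$ by a set-theoretic argument. The Radon--Nikodym density approach, combined with the operator-norm bound on the selector matrix $N_m$, is what I expect to make the estimate go through.
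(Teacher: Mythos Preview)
Your proposal is correct. Both you and the paper derive the first two assertions and \eqref{Cantor derivative vector} by componentwise reduction to Proposition~\ref{prop: Properties of GBVs scalar}(e); the paper organizes this through the sets $B_m=\bigcap_i\{|\widetilde{u}_i|\le m\}$ and the estimate $|D^cu|(B\setminus B_m)\to 0$, while you simply pass to the limit row by row, which is slightly more direct.

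The genuine difference is in \eqref{eq:Total Variation Vectorial}. The paper writes only that it ``follows by similar arguments'', whereas you spell out both inequalities and correctly isolate the nontrivial one: since the $i$-th row of $D^c\trunc{u}{m}$ is $D^cu_i$ restricted to its \emph{own} set $\{|\widetilde{u}_i|\le m\}$, the measure $D^c\trunc{u}{m}$ is not $D^cu\mres E$ for a single Borel set $E$, and the bound $|D^c\trunc{u}{m}|(B)\le|D^cu|(B)$ for each fixed $m$ cannot be obtained by set inclusion alone (the crude estimate via $\sum_i|D^cu_i|$ overshoots by a dimensional factor). Your selector-matrix device --- writing $D^c\trunc{u}{m}=N_m f\,|D^cu|$ with $N_m$ the diagonal $\{0,1\}$ matrix and using $|N_m\xi|_F\le |\xi|_F$ --- is exactly what is needed to obtain this uniform inequality, which is what upgrades the limit to the supremum (equivalently, what shows $m\mapsto|D^c\trunc{u}{m}|(B)$ is nondecreasing). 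In this respect your argument is more complete than the paper's sketch; what the $B_m$ route buys is a single framework that handles \eqref{Cantor derivative vector} and the inequality $\sup_m|D^c\trunc{u}{m}|(B)\ge|D^cu|(B)$ in one stroke.
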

\begin{proof}
    The proof of (a) and (b) is a simple consequence of Proposition \ref{prop: Properties of GBVs scalar} and of the definition of $D^cu$.  To prove (c), it is enough to note that for every $i\in\{1,...,k\}$ we have  $\widetilde{A}=\cap_{i=1}^k\widetilde{A}_i$, where $\widetilde{A}_i:=\{x\in A: \widetilde{u}_i(x) \text{ exists}\},$ and use (a) of Proposition \ref{prop: Properties of GBVs scalar}.

    We are left with proving  that for any $B\in \B(A)$ equalities \eqref{Cantor derivative vector} and \eqref{eq:Total Variation Vectorial} hold.  To this aim, let us fix $B\in\B(A)$. For every $m\in\N$ we set
        \[
    B_m:=\bigcap_{i=1}^k\{x\in B\colon \widetilde{u}_i(x) \text{ exists and } |\widetilde{u}_i(x)|\leq m\}.\]
    By Proposition \ref{prop: Properties of GBVs scalar} we see that $|D^cu|(B\setminus\bigcup_{m\in\N}B_m)\leq \sum_{i=1}^k|D^cu_i|(B\setminus\bigcup_{m\in\N}B_m)=0$. Hence, for every $\e>0$ there exists $m'\in\N$  such that  $|D^cu|(B\setminus B_{m})\leq \e$ for every $m\geq m'$. Using again Proposition \ref{prop: Properties of GBVs scalar}, by definition of $B_m$ we have 
   $D^cu_i(B_m)=D^cu^{(m)}_i(B_m)$ for every $i\in\{1,...,k
   \}$. Thus for every $m\geq m'$ we have
   \begin{equation*}
       |D^cu(B)-D^cu^{(m)}(B)|\leq |D^cu|(B\setminus B_m)\leq \e.
   \end{equation*}
   Letting $\e\to 0^+$, we obtain \eqref{Cantor derivative vector}. 

   The proof of equality \eqref{eq:Total Variation Vectorial} follows by similar  arguments. This concludes the proof.
\end{proof}

We now show that $\GBVsvector$ is well-behaved under linear changes of coordinates.

\begin{proposition}
    Let $u\in \GBVsvector$ and let  $C\in \R^{k\times k}$ be  an invertible matrix. Then $v:=Cu\in GBV_\star(A;\R^k)$ and
    \begin{eqnarray}
    &\label{eq:grandient composed matrix }\displaystyle \nabla v=C\nabla u \,\, \text{$\Ld$-a.e. in }A,\,\,
    \\ & \label{eq: Cantor composed matrix} \displaystyle D^cv(B)=CD^cv(B) \text{ for every $B\in\mathcal{B}(A)$},\\
    & \label{eq: Jump composed matrix}
  [v]=C[u]\quad \text{$\hd$-a.e. in } A.
     \end{eqnarray}
\end{proposition}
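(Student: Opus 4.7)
The plan is to reduce everything to the scalar case, since $GBV_\star(A;\Rk)$ is defined componentwise and, by Proposition~\ref{prop:GBVsscalar is a v.space }, the scalar space $GBV_\star(A)$ is closed under linear combinations with the expected additivity for $\nabla$, $D^c$, and $[\cdot]$. Writing $C=(C_{ij})$, the $i$-th component of $v:=Cu$ is $v_i=\sum_{j=1}^k C_{ij}u_j$, a linear combination of functions $u_j\in GBV_\star(A)$. Invertibility of $C$ plays no role in the proof of membership or in the derivation of the formulas; it is stated because the result will be used to describe changes of coordinates.

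First I would establish $v\in GBV_\star(A;\Rk)$. By Proposition~\ref{prop:GBVsscalar is a v.space } applied iteratively, each $v_i$ lies in $GBV_\star(A)$, and Definition~\ref{def:GBVstar} then yields $v\in GBV_\star(A;\Rk)$. No estimate on the truncated quantities needs to be carried out by hand, as this is already encoded in the scalar vector-space statement.

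Next I would verify the three identities componentwise. For the approximate gradient, the scalar identity $\nabla(u_j+u_l)=\nabla u_j+\nabla u_l$ and $\nabla(\lambda u_j)=\lambda\nabla u_j$ $\Ld$-a.e.\ give $\nabla v_i=\sum_{j}C_{ij}\nabla u_j$ $\Ld$-a.e., which is exactly the $i$-th row of $C\nabla u$, proving \eqref{eq:grandient composed matrix }. For the Cantor part, the scalar identity $D^c(u_j+u_l)=D^cu_j+D^cu_l$ and the analogous scaling formula give, for every $B\in\mathcal{B}(A)$,
\[
(D^cv(B))_i=D^cv_i(B)=\sum_{j=1}^k C_{ij}D^cu_j(B)=\sum_{j=1}^k C_{ij}(D^cu(B))_j=(CD^cu(B))_i,
\]
which proves \eqref{eq: Cantor composed matrix} (there is a harmless typo in the statement, where $D^cv(B)$ should read $D^cu(B)$ on the right-hand side). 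For the jump, the scalar identity $[u_j+u_l]=[u_j]+[u_l]$ and $[\lambda u_j]=\lambda[u_j]$ $\hd$-a.e., combined with the fact that $J_v\cup J_u\subset \bigcup_{j=1}^k J_{u_j}$ up to $\hd$-negligible sets, give $[v_i]=\sum_j C_{ij}[u_j]$ $\hd$-a.e., i.e.\ \eqref{eq: Jump composed matrix}.

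I do not foresee any serious obstacle: the only point that requires a little care is making sure the jump identity is read on a common $\hd$-full set in $\bigcup_{j}J_{u_j}$ where all the relevant one-sided traces of the components $u_j$ exist simultaneously, so that $[v_i]$ and $\sum_j C_{ij}[u_j]$ are both defined and agree. This is immediate because the union of finitely many $\hd$-negligible sets is $\hd$-negligible, so Proposition~\ref{prop:GBVsscalar is a v.space }\eqref{De Gamma Somme scalare} can be invoked simultaneously on all components.
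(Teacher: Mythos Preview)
Your proposal is correct and follows essentially the same approach as the paper: membership in $GBV_\star(A;\Rk)$ is obtained from the vector-space property (the paper cites Proposition~\ref{prop:GBVsvector is a vector space}, which itself rests on Proposition~\ref{prop:GBVsscalar is a v.space }), and the three identities are deduced componentwise from \eqref{eq: nabla somme scalare}, \eqref{eq:cantor somme scalare}, \eqref{De Gamma Somme scalare}. Your observation about the typo in \eqref{eq: Cantor composed matrix} and the irrelevance of the invertibility of $C$ for the argument are also correct.
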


\begin{proof}
    The fact that $v\in GBV(A; \R^k)$ is a  consequence of Proposition \ref{prop:GBVsvector is a vector space}, while \eqref{eq:grandient composed matrix }, \eqref{eq: Cantor composed matrix}, and \eqref{eq: Jump composed matrix} are a consequence of \eqref{eq: nabla somme scalare}, \eqref{eq:cantor somme scalare}, and \eqref{De Gamma Somme scalare}.
\end{proof}
 The following result, which generalises \cite[Proposition 3.4]{DalToa22}, characterises $\GBVsvector$ as a subspace of $GBV(A)^k$.
\begin{proposition}\label{prop:Subspace of GBV}
Let $u\in GBV(A)^k$. Then $u$ belongs to $\GBVsvector$ if and only if $u$ satisfies
    \begin{eqnarray}
        &\label{eq:nabla L1}\displaystyle \nabla u\in L^1(A;\Rkd),\\
        &\label{eq:Cantor is bounded}\displaystyle \sum_{i=1}^k|D^cu_i|\in\mathcal{M}_b(A),\\
        &\label{eq: Jump 1 e integralone are controlled}\displaystyle \int_{\jump{u}\setminus \jump{u}^1}|[u]|\, d\hd<+\infty \text{ and }\hd(\jump{u}^1)<+\infty,
    \end{eqnarray}
    where $\jump{u}^1:=\{x\in\jump{u}:|[u](x)|\geq 1\}$.
\end{proposition}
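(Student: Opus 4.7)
The plan is to reduce Proposition \ref{prop:Subspace of GBV} to the scalar characterisation \cite[Proposition 3.4]{DalToa22} applied componentwise. By Definition \ref{def:GBVstar}, $u\in \GBVsvector$ if and only if $u_i\in \GBVsscalar$ for every $i\in\{1,\dots,k\}$, and the scalar statement tells us that, for $u_i\in GBV(A)$, this happens exactly when $\nabla u_i\in L^1(A;\Rd)$, $|D^cu_i|\in \M_b(A)$, $\int_{\jump{u_i}\setmeno \jump{u_i}^1}|[u_i]|\,d\hd<+\infty$, and $\hd(\jump{u_i}^1)<+\infty$. The task is therefore to match these componentwise conditions with the global ones \eqref{eq:nabla L1}--\eqref{eq: Jump 1 e integralone are controlled}.

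Before addressing either implication I would record the elementary relations between $u$ and its components. Since the rows of $\nabla u$ and of $D^c u$ are the $\nabla u_i$ and the $D^c u_i$ respectively (the latter by Definition \ref{def:Cantor part}), one has $|\nabla u_i|\leq|\nabla u|\leq \sum_j |\nabla u_j|$ pointwise $\Ld$-a.e., and as measures $|D^c u_i|\leq |D^c u|\leq \sum_j |D^c u_j|$. For jump sets I would invoke the standard fact that for $u\in GBV(A)^k$ one has $\jump{u}=\bigcup_i \jump{u_i}$ up to $\hd$-null sets, and on $\jump{u}$ the identity $[u]=([u_1],\dots,[u_k])$ (after a compatible choice of orientation of $\nu_u$), which gives $|[u_i]|\leq |[u]|\leq \sqrt{k}\,\max_i|[u_i]|$.

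For the forward implication, \eqref{eq:nabla L1} and \eqref{eq:Cantor is bounded} are immediate from the upper bounds above. For \eqref{eq: Jump 1 e integralone are controlled}, the inclusion $\jump{u}^1\subset \bigcup_i\{x\in \jump{u_i}:|[u_i](x)|\geq 1/\sqrt{k}\}$ combined with $\hd(\jump{u_i}^1)<+\infty$ and Chebyshev applied to $\int_{\jump{u_i}\setmeno \jump{u_i}^1}|[u_i]|\,d\hd$ yields $\hd(\jump{u}^1)<+\infty$. On $\jump{u}\setmeno\jump{u}^1$, every $|[u_i]|\leq|[u]|<1$, so $|[u_i]|=|[u_i]|\wedge 1$ and $(\jump{u}\setmeno\jump{u}^1)\setmeno \jump{u_i}$ carries no $[u_i]$-mass, whence $\int_{\jump{u}\setmeno\jump{u}^1}|[u]|\,d\hd\leq \sum_i \int_{\jump{u_i}}|[u_i]|\wedge 1\,d\hd<+\infty$ by the scalar characterisation.

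For the reverse implication, the conditions on $\nabla u_i$ and on $|D^c u_i|$ follow at once from the lower bounds. For the jump set of $u_i$, the inclusion $\jump{u_i}\subset \jump{u}$ (up to $\hd$-null sets) together with $|[u_i]|\leq|[u]|$ yields $\jump{u_i}^1\subset \jump{u}^1$, so $\hd(\jump{u_i}^1)<+\infty$. Writing $\jump{u_i}\setmeno \jump{u_i}^1$ as the disjoint union of its intersections with $\jump{u}^1$ and with $\jump{u}\setmeno\jump{u}^1$, on the first piece $|[u_i]|<1$ and $\hd(\jump{u}^1)<+\infty$ give a finite integral, while on the second $|[u_i]|\leq|[u]|$ reduces matters directly to \eqref{eq: Jump 1 e integralone are controlled}. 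The main obstacle, and really the only delicate point, is the identification $\jump{u}=\bigcup_i \jump{u_i}$ up to $\hd$-null sets together with the compatibility of the normals $\nu_{u_i}$ with $\nu_u$; since every bound I use is stated in terms of the scalar quantities $|[u]|$ and $|[u_i]|$, the sign ambiguity in the normals plays no role in the estimates.
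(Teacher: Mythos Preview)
Your argument is correct. Both directions reduce the vector statement to componentwise control, and your invocation of the scalar characterisation \cite[Proposition 3.4]{DalToa22} together with the jump-set relations $\jump{u}=\bigcup_i \jump{u_i}$ (up to $\hd$-null sets) and $|[u_i]|\le |[u]|$ is sound; for $u\in GBV(A)^k$ the fine properties in Proposition~\ref{prop: Properties of GBVs scalar}(a) give $\hd(S_{u_j}\setminus \jump{u_j})=0$ for each $j$, which is exactly what you need to justify those relations.

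The paper's route differs slightly in the ``if'' direction: rather than quoting the scalar characterisation, it works directly with the truncations $u_i^{(m)}$, showing first $u_i^{(m)}\in BV(A)$ via the bound
\[
\int_{\jump{u_i^{(m)}}}|[u_i^{(m)}]|\,d\hd \le \int_{\jump{u}\setminus \jump{u}^1}|[u]|\,d\hd + (2m+1)\,\hd(\jump{u}^1),
\]
and then the uniform estimate \eqref{eq:boundsinglecomponent}. Your approach is cleaner in that it reuses the scalar result as a black box and avoids reproving the truncation bound; the paper's is more self-contained but otherwise uses the same scalar-to-vector jump comparisons (e.g.\ $\jump{u_i^{(m)}}^1\subset \jump{u}^1$, $|[u_i^{(m)}]|\le |[u]|$) that you make explicit. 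The ``only if'' direction is handled the same way in both.
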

\begin{proof}
The only if part is a consequence of Proposition \ref{prop: Properties of GBVs scalar} and of the equality
\begin{equation}\label{eq:trivial}
    \int_{\jump{u}}|[u]|\land 1\, d\hd=\int_{\jump{u}\setminus \jump{u}^1}|[u]|\, d\hd+\hd(\jump{u}^1).
\end{equation}

To prove the if part,  let $u\in GBV(A)^k$ be a function satisfying \eqref{eq:nabla L1}, \eqref{eq:Cantor is bounded}, and \eqref{eq: Jump 1 e integralone are controlled}. Let us fix  $i\in\{1,...,k\}$. First, we show that for every $m>0$ we have $u_i^{(m)}\in BV(A)$.
Since $\|u_i^{(m)}\|_{L^\infty(A)}\leq m$ and $u$ satisfies \eqref{eq:nabla L1}  and  \eqref{eq:Cantor is bounded}, we only have to show that  
\begin{equation*}
    \int_{J_{u^{(m)}_i}}|[u_i]|\,d\hd<+\infty.
\end{equation*}
Taking advantage of \eqref{eq: Jump 1 e integralone are controlled}, we see that
\begin{align*}
\displaystyle\int_{\jump{u_i^{(m)}}}|[u_i^{(m)}]|\, d\hd&=\int_{\jump{u_i^{(m)}}\setminus \jump{u_i^{(m)}}^1}\!\!\!\!\!\!\!\!\!|[u_i^{(m)}]|\, d\hd+\int_{ \jump{u_i^{(m)}}^1}|[u_i^{(m)}]|\, d\hd\\
    &\leq \int_{\jump{u}\setminus \jump{u}^1}|[u]|\,d\hd+(2m+1)\hd(\jump{u}^1)<+\infty.
\end{align*}
Hence, $u\in BV(A)$. Finally, by  \eqref{eq:trivial} we see that for every $m>0$
\[
\int_{\jump{u_i^{(m)}}}\!\!\!\!\!\!\!\!\!\!\!\!|[u_i^{(m)}]|\land 1\, d\hd\leq\int_{\jump{u}\setminus \jump{u}^1}|[u]|\, d\hd+\hd(\jump{u}^1).\]
Combining this last inequality  with \eqref{eq:nabla L1}, \eqref{eq:Cantor is bounded} and \eqref{eq: Jump 1 e integralone are controlled}, we deduce that
$u_i\in\GBVsscalar$, so that from the arbitrariness of $i$, we deduce $u\in\GBVsvector$, concluding the proof.\end{proof}

 \begin{remark}
    It follows immediately from Definition \ref{def:GBVstar}, Lemma \ref{lemma:Dcu is a sup}, and Proposition \ref{prop:Subspace of GBV}  that $u\in GBV_\star(A;\Rk)$ if and only if there exists a constant $M>0$ such that for every $m>0$ the function $u^{(m)}$ belongs to $BV(A;\Rk)$ and 
    \begin{equation}\nonumber \label{eq:defvectgbv}
   \sup_{m>0}\int_A|\nabla u^{(m)}|\, dx+|D^cu^{(m)}|(A)+\int_{J_{u^{(m)}}}|[u^{(m)}]|\land 1\,d\mathcal{H}^{d-1}\leq M.
   \end{equation}
\end{remark}

 In \cite[Lemma 2.10]{Alicandro-Focardi}, Alicandro and Focardi are able to define a measure which they call $D^cu$ for every $u\in GBV(A)^k$ such that $|D^cu|(A)<+\infty$. To define such a measure, they deal with {\it smooth} truncations. In analogy with their approach, we characterise $GBV_\star(A;\Rk)$ by means of composition with smooth functions. To this aim, we introduce the following functions. 
Given  a positive constant $\sigma>2$, we fix a  smooth radial function $\psi\in C^\infty_c(\Rk;\Rk)$ satisfying
\begin{equation}\nonumber \label{eq:defpsi}
    \begin{cases}
        \psi(y)=y&\text{ if }|y|\leq 1,\\
        \psi(y)=0&\text{ if }|y|\geq \sigma,\\
        |\psi(y)|\leq \sigma,\\
        \textup{Lip}(\psi)=1.
    \end{cases}
\end{equation}
Given $R>0$, we set
\begin{equation}\label{def:psiR}\psi_{R}(y):=R \psi\bigg(\frac{y}{R}\bigg)\quad \text{ for every $y\in\Rk$}.
\end{equation}
Observe that the function  $\psi_R$ satisfies
\begin{subequations}
  \begin{empheq}[left=\empheqlbrace]{align}
\label{eq:PsiR}  &\psi_R(y)=y&  &\hspace{-2cm}\text{for }y\in\{|y|\leq R\},\\
 \label{eq:PsiR0}&\psi_R(y)=0& &\hspace{-2cm}\text{for }y\in \{|y|\geq \sigma R\},\\ 
 \label{eq:PsiRbounded}&|\psi_R(y)|\leq \sigma R&\\
\label{eq:PsiRLip}&\text{Lip}(\psi_R)\leq 1.&
  \end{empheq}
\end{subequations}

The following proposition characterises $GBV_\star(A;\Rk)$ in terms of composition with smooth functions. 
\begin{proposition}\label{prop:characterisation}
   For every  $u\in GBV_\star(A;\Rk)$  there is a constant $C_u>0$ such that for every Lipschitz function $\phi\colon \Rk\to\Rk$ with compact support the function  $v:=\phi\circ u$ is in  $BV(A;\Rk)$ and satisfies the following inequality
\begin{equation}\label{eq:def GBVstar}
    \int_A|\nabla v|\, dx+|D^cv|(A)+\int_{J_{v}}|[v]|\land 1\,d\mathcal{H}^{d-1}\leq C_u(\textup{Lip}(\phi)\lor 1).
\end{equation}
Conversely, if $u\in L^0(A;\Rk)$ and there is a constant $C_u>0$ such that for every integer $R>0$ the function $\psi_R\circ u$ belongs to $BV(A;\Rk)$ and inequality \eqref{eq:def GBVstar} holds with $v=\psi_R\circ u$ and $\phi=\psi_R$, then $u\in GBV_\star (A;\Rk)$.
\end{proposition}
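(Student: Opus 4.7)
Let $M$ be the constant provided by the Remark following Proposition \ref{prop:Subspace of GBV}, so that
\begin{equation}\nonumber
\int_A|\nabla u^{(m)}|\,dx+|D^cu^{(m)}|(A)+\int_{J_{u^{(m)}}}|[u^{(m)}]|\land 1\,d\hd\le M
\end{equation}
for every $m>0$. Given a Lipschitz $\phi\colon\Rk\to\Rk$ with $\mathrm{supp}\,\phi\subset\overline{B_R(0)}$, pick any $m\ge R$ and first verify the pointwise identity $\phi(y)=\phi(y^{(m)})$ on $\Rk$: if $|y|\le R$ then each $|y_i|\le m$ and so $y^{(m)}=y$; if $|y|>R$ then necessarily $|y^{(m)}|>R$ as well (otherwise $|y_i^{(m)}|\le R\le m$ would force $y^{(m)}=y$, hence $|y|\le R$), so both sides vanish. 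Consequently $v=\phi\circ u=\phi\circ u^{(m)}$, and since $u^{(m)}\in BV(A;\Rk)$, a standard Lipschitz composition argument gives $v\in BV(A;\Rk)$. Mollifying $\phi$ into $C^1$ functions and passing to the limit in Theorem \ref{thm:Chain Rule} yields $|\nabla v|\le\textup{Lip}(\phi)|\nabla u^{(m)}|$ $\Ld$-a.e., $|D^cv|\le\textup{Lip}(\phi)|D^cu^{(m)}|$ as measures, and $|[v]|\le\textup{Lip}(\phi)|[u^{(m)}]|$ on $J_v\subset J_{u^{(m)}}$. Combined with the elementary inequality $(Lt)\land 1\le(L\lor 1)(t\land 1)$, valid for all $L,t\ge 0$, this produces \eqref{eq:def GBVstar} with $C_u:=M$.

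\textbf{Converse direction.} Define the componentwise truncation $P_m\colon\Rk\to\Rk$ by $P_m(y)_i:=(y_i\land m)\lor(-m)$; it is $1$-Lipschitz and satisfies $P_m(y)=y^{(m)}$. Fix $m>0$ and, for every integer $R>0$, set $v_R:=P_m(\psi_R\circ u)=(\psi_R\circ u)^{(m)}$. The Lipschitz chain rule applied to $\psi_R\circ u\in BV(A;\Rk)$ gives the pointwise estimates $|\nabla v_R|\le|\nabla(\psi_R\circ u)|$, $|D^cv_R|\le|D^c(\psi_R\circ u)|$, and $|[v_R]|\le|[\psi_R\circ u]|$, so the hypothesis together with $\textup{Lip}(\psi_R)\le 1$ yields
\begin{equation}\nonumber
\F(v_R):=\int_A|\nabla v_R|\,dx+|D^cv_R|(A)+\int_{J_{v_R}}|[v_R]|\land 1\,d\hd\le C_u,
\end{equation}
uniformly in $R$. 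Since $\psi_R(y)\to y$ pointwise as $R\to\infty$, $v_R\to u^{(m)}$ $\Ld$-a.e.\ in $A$, and the uniform bound $|v_R|\le m\sqrt{k}$ upgrades this to $L^1$-convergence by dominated convergence. The $L^1$-lower semicontinuity of $\F$ on $BV(A;\Rk)$---a classical result for Mumford--Shah-type functionals whose bulk part has linear growth and whose jump part uses a concave function of the jump amplitude, see e.g.\ \cite[Chapter 5]{AmbFuscPall} and \cite[Theorem 3.1]{Braides1995}---then delivers $u^{(m)}\in BV(A;\Rk)$ and $\F(u^{(m)})\le C_u$. As $m>0$ is arbitrary, the Remark following Proposition \ref{prop:Subspace of GBV} concludes that $u\in GBV_\star(A;\Rk)$.

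\textbf{Main obstacle.} The delicate step is the lower semicontinuity of $\F$, not merely of the total variation $|Dv|$. Linear growth of the bulk integrand $f(\xi)=|\xi|$ permits in principle the absolutely continuous part of the limit gradient to concentrate onto the singular part of $Du^{(m)}$; the simultaneous control of the three pieces of $\F$ relies on the compatibility $f^\infty=f$ for the bulk and the concavity of $t\mapsto t\land 1$ for the jump term, which together trigger Ambrosio's lower semicontinuity theorem for $BV$ functionals of linear growth.
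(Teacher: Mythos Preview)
Your argument follows the paper's overall strategy: in the forward direction, write $\phi\circ u=\phi\circ u^{(m)}$ for $m$ large and apply the chain rule to $u^{(m)}\in BV(A;\Rk)$; in the converse direction, approximate $u^{(m)}$ by bounded truncations built from $\psi_R\circ u$ and pass to the limit. Your choice $v_R=(\psi_R\circ u)^{(m)}$ is in fact tidier than the paper's $\psi_R\circ(u^{(m)})$, since the hypothesis directly controls $\psi_R\circ u$ and the outer truncation $P_m$ is $1$-Lipschitz, so the estimate transfers immediately.

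There is, however, a genuine gap in your converse argument. You assert that the $L^1$-lower semicontinuity of $\F$ ``delivers $u^{(m)}\in BV(A;\Rk)$'', but this is circular: the lower semicontinuity results you cite are stated for limits already known to lie in $BV$, and in any case finiteness of $\F$ does \emph{not} force a function into $BV$, because the surface term $|[v]|\land 1$ fails to control the full jump mass $|[v]|$. You must first prove $u^{(m)}\in BV(A;\Rk)$ by a separate argument, and this is precisely where the $L^\infty$ bound $\|v_R\|_\infty\le m\sqrt{k}$ is needed: it gives $|[v_R]|\le 2m\sqrt{k}$, hence
\[
\int_{J_{v_R}}|[v_R]|\,d\hd\le(2m\sqrt{k}\lor 1)\int_{J_{v_R}}|[v_R]|\land 1\,d\hd\le(2m\sqrt{k}\lor 1)\,C_u,
\]
and therefore $\sup_R|Dv_R|(A)<\infty$. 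Lower semicontinuity of the total variation then yields $u^{(m)}\in BV(A;\Rk)$, after which the lower semicontinuity of $\F$ gives $\F(u^{(m)})\le C_u$. The paper carries out exactly this two-step argument, which your ``Main obstacle'' paragraph overlooks in favour of the (genuine but secondary) issue of why $\F$ itself is lower semicontinuous. A minor additional point: in the forward direction your identity $\phi(y)=\phi(y^{(m)})$ requires $m>R$ strictly, since with $m=R$ and $k=1$ the point $y=2R$ gives $y^{(m)}=R\in\overline{B_R(0)}=\mathrm{supp}\,\phi$.
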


\begin{proof}
We begin by proving that if $u\in GBV_\star(A;\Rk)$ there is a constant $C_u>0$ such that \eqref{eq:def GBVstar} is satisfied. Thanks to the elementary inequalities
\begin{eqnarray*}
&&\displaystyle \label{eq:elementary 1}|\xi|\leq \sum_{j=1}^k|\xi_j|\quad\text{for every $i\in\{1,...,k\}$ and $\xi\in\Rkd$},\\
&&\displaystyle \label{elementary 2} |[\zeta]|\land 1\leq \sum_{j=1}^k(|[\zeta_j]|\land 1)\quad \text{for every $i\in\{1,...,k\}$ and $\zeta\in\Rk$},
\end{eqnarray*}
proving \eqref{eq:def GBVstar} is equivalent to proving that \eqref{eq:def GBVstar} holds with $v$ replaced by $v_i$. 

We prove first the result for $\phi\in C^1_c(\Rk;\Rk)$.
We set $v=\phi\circ u$ and $K:=\text{supp}(\phi)$. Note that if $m> \max_{y\in K}\lvert y \rvert$, then $v=\phi(\trunc{u}{m})$ $\Ld$-a.e. in $A$. Since $u\in GBV_\star(A;\Rk)$, then $\trunc{u}{m}\in BV(A;\Rk)$. By the chain rule \eqref{eq: Chain Rule for BV} we then infer that $v_i\in BV(A)$ for every $i\in\{1,...,k\}$. 

We claim that up to an $\hd$-negligible set $J_{v_i}\subset J_{u_i^{(m)}}$. Indeed, by \cite[Proposition 1.1 (iii)]{Ambrosio1990ExistenceTF} every point $x\in A$ of approximate continuity for $u_i$ is also a point of approximate continuity for $v_i$, so that we have the inclusion $S_{v_i}\subset S_{u_{i}^{(m)}}$ and by (m) of Section 2 we have  $\hd(S_{u_i^{(m)}}\setminus J_{u_i^{(m)}})=\hd(S_{v_i}\setminus J_{v_i})=0$, whence the claim.

 Using the Chain Rule \eqref{eq: Chain Rule for BV}, together with Proposition \ref{prop: Properties of GBVs scalar} and  inequality \begin{equation*}
     |[\phi_i(\zeta)]|\land 1\leq (\textup{Lip}(\phi)\lor 1)\sum_{i=1}^k (|[\zeta_i]|\land 1),
 \end{equation*} we see that
\begin{eqnarray*}
\int_A|\nabla v_i|\, dx\!\!\!\!\!\!\!\!\!\!\!&&\,+|D^cv_i|(A)+\int_{J_{v_i}}|[v_i]|\land 1\, d\mathcal{H}^{d-1}\\
&&\hspace{-1.90 cm}=\int_A|\nabla \phi_i(\trunc{u}{m})\nabla\trunc{u}{m}|\,dx\!+\!\int_A|\nabla\phi_i(\widetilde{\trunc{u}{m}})|\, d|D^c\trunc{u}{m}|+\int_{J_{\phi_i(\trunc{u}{m})}}\!\!\!\!\!\!\!\!\!\!\!\!\!\!\!\!\!\!|[\phi_i(u^{{(m)}})]|\,d\mathcal{H}^{d-1}\\ 
&&\hspace{-1.90 cm}\leq (\textup{Lip}(\phi)\lor 1)\sum_{i=1}^k\Big(\int_A|\nabla u^{(m)}_i|\, dx+|D^c\trunc{u_i}{m}|(A)+\int_{J_{\trunc{u_i}{m}}}\!\!\!\!\!|[u_i^{(m)}]|\land 1\,d\mathcal{H}^{d-1}\Big).
\end{eqnarray*}
Since $u\in GBV_\star(A;\Rk)$ there exists a constant $M>0$ such that    \eqref{eq:boundsinglecomponent} holds  for every $i\in\{1,...,k\}$ and from the previous inequality  it follows  that 
\begin{equation*}
    \int_A|\nabla v_i|\, dx+|D^cv_i|(A)+\int_{J_{v_i}}|[v_i]|\land 1\, d\mathcal{H}^{d-1}\leq kM(\textup{Lip}(\phi)\lor 1),
\end{equation*} so that \eqref{eq:def GBVstar} is proved for $\phi \in C^1_c(\Rk;\Rk)$, with $C_u=k^2M$.

If $\phi$ is Lipschitz with compact support, then there exists a sequence  $(\phi_n)_n\subset C^1_c(\Rk;\Rk)$ such that $\phi_n\to \phi$ uniformly on $\Rk$ and such that $\textup{Lip}(\phi_n)\leq \textup{Lip}(\phi)$. In particular, $\phi_n\circ u\to \phi\circ u$ in $L^1(A;\Rk)$. We claim that
\begin{equation}\label{eq:primo claim di boundedness}
    \sup_{n\in\N} |D(\phi_n\circ u)|(A)<+\infty.
\end{equation}
Indeed, from \eqref{eq:def GBVstar} applied to $v=\phi_n\circ u$,  we deduce that to prove the claim it is enough to check that 
\[
\sup_{n\in\N}\int_{J_{\phi_n\circ u}}|[\phi_n\circ u]|\,d\hd<+\infty.
\]
Using again \eqref{eq:def GBVstar} with $v$ replaced by $\phi_n\circ u$, and recalling that $\textup{Lip}(\phi_n)\leq \textup{Lip}(\phi)$ and $\phi_n\to \phi$ uniformly, for $n$ large enough we obtain that
\begin{align}\nonumber 
    \int_{J_{\phi_n\circ u}}\!\!\!\!\!\!|[\phi_n\circ u]|\,d\hd & \leq \int_{J_{\phi_n\circ u}\setminus J^1_{\phi_n\circ u}}\!\!\!\!\!\!\!\!\!\! |[\phi_n\circ u]|\,d\hd +2\max_{y\in\Rk}|\phi_n(y)|\hd(J^1_{\phi_n\circ u})\\\notag
    &\leq(1+4\max_{y\in\Rk}|\phi(y)| )\int_{J_{\phi_n\circ u}}|[\phi_n\circ u]|\land 1\,d\hd\\
    &\leq (1+4\max_{y\in\Rk}|\phi(y)| )C_u(\text{Lip}(\phi)\lor 1),
\end{align}
where $J^1_{\phi_n\circ u}$ is defined as in the statement of Proposition \ref{prop:Subspace of GBV}. This proves the claim.

By the lower semicontinuity of the total variation with respect to the $L^1$-convergence it follows that $\phi\circ u\in BV(A;\Rk)$, while from \cite[Theorem 2.1]{Braides1995} and $\textup{Lip}(\phi_n)\leq   \textup{Lip}(\phi) $ it follows that
\begin{equation*}
     \int_A|\nabla (\phi\circ u)_i|\, dx+|D^c(\phi\circ u)_i|(A)+\int_{J_{(\phi\circ u)_i}}\!\!\!|[(\phi\circ u)_i]|\land 1\, d\mathcal{H}^{d-1}\leq C_u(\textup{Lip}(\phi)\lor 1),
\end{equation*}
so that \eqref{eq:def GBVstar} is proved.

We now show that also the converse holds.  To this aim, for every $m\in\N$ let $(\phi^m_n)_n$ be a sequence of $C^1$-functions from $\Rk$ to $\Rk$ converging to $\xi\mapsto\xi^m$ uniformly and such that $\text{Lip}(\phi^m_n)\leq1$ and $\|\phi_n^m\|_{L^\infty}\leq 2m$ for every $n\in\N$. For  $R>0$ and  $m,n\in\N$ we set $v^m_{R,n}:=\phi^m_n\circ(\psi_R\circ (u))$. It follows immediately from \eqref{eq:PsiR} that for every $m,n>0$ we have
\begin{equation}\label{eq:pointwise limit in proof equiv def}
    \lim_{R\to+\infty}v^m_{R,n}(x)=(\phi^m_n\circ u)(x)\quad\quad\text{for }\mathcal{L}^d\text{-a.e. }x\in A.
\end{equation}
Since $\|v^m_{R,n}\|_{L^\infty(A;\Rk)}\leq 2m$ for every $n\in\N$, it follows from \eqref{eq:pointwise limit in proof equiv def} that 
\begin{equation}\label{eq:vrm converges in L^1}
    v^m_{R,n} \text{ converges to } (\phi^m_n\circ u) \text{ in } L^1(A;\Rk)\text{ as $R\to+\infty$}.
\end{equation}
 By definition of $\phi^m_n$ we also have
\begin{equation}\label{convergence in n}
   \phi^m_n\circ u \text{ converges to } u^{(m)}\text{ in } L^1(A;\Rk)\text{ as $n\to+\infty$}.
\end{equation}
By the Chain Rule \eqref{eq: Chain Rule for BV}, we have that
\begin{gather*}
    \nabla v^m_{R,n}=\nabla\phi _n^m(\psi_R\circ u)\nabla(\psi_R\circ u) \,\, \Ld\text{-a.e. in } A,\\
   D^c v^m_{R,n}=\nabla\phi _n^m(\psi_R\circ \widetilde{u})D^c(\psi_R\circ u)\,\, \text{ as Borel measures in }A,\\
   [v^m_{R,n}]=[\phi^m_{n}(\psi_R(u^+))-\phi^m_{R,n}(\psi_R(u^+))] \,\, \hd\text{-a.e in }J_{v^m_{R,n}},
\end{gather*}
 so that, using $\text{Lip}(\phi^m_n)\leq 1$, we have 
 \begin{gather*}
     |\nabla v^m_{R,n}|\leq |\nabla(\psi_R\circ u)|\,\, \Ld\text{-a.e. in } A,\\
   |D^c v^m_{R,n}|(B)\leq |D^c(\psi_R\circ u)|(B)\,\,\text{ for every }B\in\B(A),\\
   |[v^m_{R,n}]|\leq|[\psi_R\circ u]|\,\,\,\, \hd\text{-a.e in }J_{v^m_{R,n}}.
 \end{gather*}
From these  inequalities, together with \eqref{eq:def GBVstar}, for every $R>0$  and  $m,n\in\N$ we obtain that
\begin{equation}\label{eq:sium}
    \int_A|\nabla v^m_{R,n}|\,dx+|D^cv^m_{R,n}|(A)+\int_{J_{v^m_{R,n}}}\!\!\!\!|[v^m_{R,n}]|\land 1\,d\mathcal{H}^{d-1}\leq C_u,
\end{equation} for a constant $C_u>0$, independent of $R>0$, $n$, and of $m$.

We claim that for every $m>0$ 
\begin{equation}\label{eq:psiRtruncatedisBV}
\sup_{n\in\N}\sup_{R>0}|Dv^m_{R,n}|(A)<+\infty .
\end{equation}
To show this, it is sufficient to prove that 
\[\sup_{n\in\N}\sup_{R>0}\int_{J_{v^m_{R,n}}}|[v^m_{R,n}]|\,d\mathcal{H}^{d-1}<(4m+1)C_u.\]
By \eqref{eq:sium} and $\|v^m_{R,n}\|_{L^\infty(A;\Rk)}\leq 2m$ we have 
\begin{align*}
    \int_{J_{v^m_{R,n}}}\!\!\!\!\!\!\!\!\!\!|[v^m_{R,n}]|\,d\mathcal{H}^{d-1}
    &\leq\int_{J_{v^m_{R,n}}\setminus J^1_{v^m_{R,n}}}\!\!\!\!\!|[v^m_{R,n}]|\,d\mathcal{H}^{d-1}+4m\mathcal{H}^{d-1}(J^1_{v_{R,n}^m})\\
    &= (4m+1)\int_{J_{v^m_{R,n}}}|[v^m_{R,n}]|\land 1\,d\hd\leq (4m+1)C_u,
\end{align*}
where  the last inequality follows from \eqref{eq:sium}. This proves \eqref{eq:psiRtruncatedisBV} and shows that  $v^m_{R,n}\in BV(A;\Rk)$ for every $R>0$ and $n\in\N$. 

Thanks to \eqref{eq:vrm converges in L^1}, \eqref{convergence in n}, and  \eqref{eq:psiRtruncatedisBV}, the lower semicontinuity of the Total Variation with respect to the $L^1$-convergence implies that $\trunc{u}{m}\in BV(A;\Rk)$ for every $m>0$. 

Finally, in light of \eqref{eq:vrm converges in L^1}, \eqref{convergence in n}, \eqref{eq:psiRtruncatedisBV}, and $\text{Lip}(\phi^m_n)\leq 1$, we may apply \cite[Theorem 2.1]{Braides1995} twice to conclude that for every $i\in\{1,...,k\}$ we have
\begin{align*}
 &\hspace{-1.5 cm}\int_A|\nabla u_i^{(m)}|\,dx+|D^c u_i^{(m)}|(A)\int_{J_{u_i^{(m)}}}|[u_i^{(m)}]|\land 1\,d\mathcal{H}^{d-1}
\\
&\leq\liminf_{n\to+\infty}\liminf_{R\to+\infty}\Big(\int_A|\nabla (v_{R}^m)_i|\,dx+|D^c(v^m_R)_i|(A)\!\\&\hspace{3.5 cm}+\int_{J_{(v_{R}^m)_i}}\!\!\!\!\!\!\!\!\!\!|[(v_{R}^m)_i]|\land 1\,d\hd
\!\Big)\leq C_u,
\end{align*}
 which implies that $u\in GBV_\star(A;\Rk)$, concluding the proof.
\end{proof}

\begin{remark}
To guarantee that $u\in GBV_\star(\Omega;\Rk)$, it is crucial that the bound on the right-hand side of \eqref{eq:def GBVstar} does not depend on the support of $\phi$, as it can be seen considering a function suggested by \cite[Remark  4.9]{Pallara90}. Indeed, Let $A=(-1,1)$ and $u=\text{sign}(\sin(\frac{\pi}{x}))/x$. It is immediate to check that $J_u=\{1/n\}_{n\in \Z}$ and $|[u](1/n)|=2n$ for every $n\in\Z$. For every $R>0$, consider a function $\phi\in C^1_c(\R;\R)$, with supp$(\phi)=[-R;R]$ and  $\phi(y)=y$ for $|y|\leq R/2$. We set $v:=\phi\circ u$ and note that $J_{v}= \{1/n\colon n\in\Z \text{ and } |n|\leq \lfloor R\rfloor\}$, where $\lfloor\,\cdot\,\rfloor$ denotes the floor function. Then 
\begin{equation*}
    \int_{J_v}|[v]|\land 1\,d\mathcal{H}^0\geq\mathcal{H}^0(\{1/n\colon n\in\Z \text{ and } |n|\leq \lfloor R/2\rfloor\}),
\end{equation*}
so that we immediately get the  lower bound 
\begin{equation*}
\int_A|\nabla v|\,dx+  \int_{J_v}|[v]|\land 1\,d\mathcal{H}^0\geq \mathcal{H}^0(\{1/n\colon n\in\Z \text{ and } |n|\leq \lfloor R/2\rfloor\}),
\end{equation*}
which blows up as $R\to +\infty $, so that $u\notin GBV_\star(A)$.  
\end{remark}

\begin{remark}\label{re:Vectorholds with n generic}
     Let $n\in\N$. Following the lines of the proof of Proposition \ref{prop:characterisation}, it can be shown that $u\in\GBVsvector$ if and only if there exists a constant $C_u>0$ such that for every $\phi\in C^1_c(\Rk,\R^n)$ the function $\phi\circ u\in BV(A;\R^n)$ and 
     \begin{equation*}
         \int_{A}|\nabla (\phi\circ u)|\, dx+|D^c(\phi\circ u)|(A)+\int_{J_{\phi\circ u}}|[\phi\circ u]|\, d\hd\leq C_u(\textup{Lip}(\phi)\lor 1).
     \end{equation*}
     In particular, we can use $n=1$.
\end{remark}

The next proposition collects some properties of the approximate gradient and of the jump part of compositions of functions in $GBV_\star(A;\Rk)$ with the function $\psi_R$ defined in \eqref{eq:defpsi}.
\begin{proposition}\label{prop:psi sui salti}
    Let $u\in GBV_\star(A;\Rk)$. Then for every $R>0$ we have
    \begin{itemize}
        \item[(i)] $\nabla (\psi_R\circ u)=\nabla u$ $\Ld$-a.e. in $\{x\in A\colon |u(x)|\leq R\}$;
        \item[(ii)] $\jump{\psi_R\circ u}\subset\jump{u}$ up to an $\hd$-negligible set  and $|[\psi_R\circ u]|\leq|[u]|$ on $\jump{\psi_R\circ u}\cap\jump{u}$. Moreover, for $\hd$-a.e $x$ in $\jump{u}$ and every $R>|u^+(x)|\lor|u^-(x)|$  we have $|[\psi_R\circ u](x)|=|[u](x)|$.
    \end{itemize}
\end{proposition}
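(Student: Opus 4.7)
The approach treats (i) and (ii) separately, both starting from the observation that, by Proposition \ref{prop:characterisation}, the composition $v := \psi_R \circ u$ lies in $BV(A; \mathbb{R}^k)$ since $\psi_R$ is Lipschitz with compact support. In particular, $v$ is approximately differentiable $\mathcal{L}^d$-a.e.\ and $\mathcal{H}^{d-1}(S_v \setminus J_v) = 0$; these facts will combine with the analogous properties for $u$ that follow componentwise from Proposition \ref{prop: Properties of GBVs scalar}.

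For part (i), set $E := \{x \in A : |u(x)| \leq R\}$. By \eqref{eq:PsiR}, $v = u$ pointwise on $E$. By the Lebesgue density theorem, $\mathcal{L}^d$-a.e.\ $x \in E$ is a point of density $1$ of $E$, so in particular $\widetilde u(x) = u(x)$ with $|\widetilde u(x)| \leq R$ and therefore $\widetilde v(x) = \psi_R(\widetilde u(x)) = \widetilde u(x)$. Restricting to the $\mathcal{L}^d$-full subset of $E$ on which both $u$ and $v$ are approximately differentiable, I would invoke the elementary fact that two approximately differentiable functions sharing the same approximate limit at $x$ and agreeing on a set of density $1$ at $x$ must have the same approximate gradient at $x$. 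This yields $\nabla v(x) = \nabla u(x)$ for $\mathcal{L}^d$-a.e.\ $x \in E$.

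For part (ii), the inclusion $J_v \subset J_u$ up to an $\mathcal{H}^{d-1}$-null set follows from the continuity of $\psi_R$: if $u$ has approximate limit $a$ at $x$, then $v$ has approximate limit $\psi_R(a)$ at $x$, so $S_v \subset S_u$; combining with $\mathcal{H}^{d-1}(S_u \setminus J_u) = \mathcal{H}^{d-1}(S_v \setminus J_v) = 0$ yields $J_v \subset J_u$ modulo $\mathcal{H}^{d-1}$-null. The bound $|[v]| \leq |[u]|$ on $J_v \cap J_u$ follows from the same argument applied one-sidedly on $H^{\pm}_x$: the one-sided approximate limits of $v$ from $H^{\pm}_x$ equal $\psi_R(u^{\pm}(x))$, so
\[
[v](x) = \psi_R(u^+(x)) - \psi_R(u^-(x)),
\]
and \eqref{eq:PsiRLip} gives $|[v](x)| \leq |[u](x)|$. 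The equality when $R > |u^+(x)| \vee |u^-(x)|$ is then immediate from \eqref{eq:PsiR}, since in that regime $\psi_R(u^{\pm}(x)) = u^{\pm}(x)$, so $[v](x) = [u](x)$ and in particular $x \in J_v$.

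The main technical point is the density argument in part (i), which is needed precisely to handle the potentially nontrivial set $\{|u| = R\}$ where $\psi_R$ fails to be classically differentiable. By exploiting the pointwise identity $v = u$ on $E$ together with the Lebesgue density theorem, this step sidesteps any chain-rule computation involving the Jacobian of $\psi_R$ along the sphere of radius $R$, which would not obviously reduce to the identity matrix there.
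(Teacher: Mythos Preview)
Your argument is correct, and for part (ii) it is essentially the paper's proof written out in full: the paper refers back to the proof of Proposition~\ref{prop:characterisation} for the inclusion $J_{\psi_R\circ u}\subset J_u$ (via $S_v\subset S_{u^{(m)}}$ and $\hd(S\setminus J)=0$ for $BV$ functions), then invokes \eqref{eq:PsiRLip} for the Lipschitz bound on the jump and \eqref{eq:PsiR} for the equality when $R>|u^+(x)|\lor|u^-(x)|$, exactly as you do.

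For part (i) the routes genuinely differ. The paper applies a chain-rule result for compositions (\cite[Proposition~1.2]{Ambrosio1990ExistenceTF}) together with the fact that $\nabla\psi_R(y)=I$ on $\{|y|\leq R\}$, obtaining $\nabla(\psi_R\circ u)=\nabla\psi_R(u)\nabla u=\nabla u$ there in one line. You instead use a locality argument: $v=u$ on $E=\{|u|\leq R\}$, and at density-one points of $E$ where both functions are approximately differentiable the approximate gradients must coincide. Both are valid; your approach is slightly more elementary and would survive if $\psi_R$ were merely Lipschitz, while the paper's is shorter given the cited result. One remark on your closing paragraph: your concern about $\psi_R$ ``failing to be classically differentiable'' on the sphere $\{|y|=R\}$ is misplaced. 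By construction $\psi\in C^\infty_c(\Rk;\Rk)$, so $\psi_R$ is smooth everywhere; since $\psi_R(y)=y$ on the closed ball $\{|y|\leq R\}$ and $\nabla\psi_R$ is continuous, one has $\nabla\psi_R(y)=I$ up to and including $|y|=R$. So the chain-rule route has no boundary difficulty to sidestep, and your density argument, while perfectly sound, is not needed to avoid one.
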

\begin{proof}
    The proof of (i) is a  consequence of \cite[Proposition 1.2]{Ambrosio1990ExistenceTF} and the fact that $\nabla\psi_R$ is the identity on $\{x\in A\colon |u(x)|\leq R\}$. 

    The inclusion $\jump{\psi_R\circ u}\subset\jump{u}$ up to an $\hd$-negligible set  was already proved in the first part of the  proof of Proposition \ref{prop:characterisation}, while the inequality $|[\psi_R\circ u]|\leq|[u]|$ on $\jump{\psi_R\circ u}\cap\jump{u}$ is a consequence of \eqref{eq:PsiRLip}.   The fact that for $\hd$-a.e $x$ in $\jump{u}$ and every $R>|u^+(x)|\lor|u^-(x)|$  we have $|[\psi_R\circ u](x)|=|[u](x)|$, follows immediately from \eqref{eq:PsiR}, and the fact that $[\psi_R\circ u]=\psi_R(u^+)-\psi_R(u^-)$.
\end{proof}

In the following proposition we study the relation between the Cantor part of $u\!\in \!GBV_\star(A;\Rk)$ and the Cantor part of smooth truncations of $u$. As a consequence, we derive the equivalence of our Definition \ref{def:Cantor part} and the one given by Alicandro and Focardi in \cite{Alicandro-Focardi}.
\begin{proposition}\label{prop:properties of CantorPart}
     Let $u\in GBV_\star(A;\Rk)$ and $\phi\in C^1_c(\Rk;\Rk)$. Then 
    \begin{itemize}
        \item[(i)]  $D^c(\phi \circ u)=\nabla \phi(\widetilde{u})D^cu$ as Radon measures on $A$;
        \item [(ii)] we have
        \begin{eqnarray}
            &\displaystyle\label{eq:limits cantor 1} D^cu(B)=\lim_{R\to+\infty}D^c(\psi_R\circ u)(B) \quad \text{ for every $B\in\mathcal{B}(A)$,}\\
            &\displaystyle \label{eq:limits cantor 2} |D^cu|(B)=\lim_{R\to+\infty}|D^c(\psi_R\circ u)|(B) \quad \text{ for every $B\in\mathcal{B}(A)$.}
        \end{eqnarray}
        \item [(iii)]  for every $R>0$ we have 
        \begin{equation}\label{eq:prima propr cantor}
            \frac{dD^c(\psi_{R}\circ u)}{d|D^c(\psi_{R}\circ u)|}=\frac{dD^cu}{d|D^cu|} \quad \text{$|D^cu|$-a.e. in $\ A_R$},
        \end{equation} 
          where $A_R:=\{x\in A\colon \widetilde{u}(x)\text{ exists and } |\widetilde{u}(x)|\leq R\}$.
        As a consequence we have
        \begin{equation}\label{seconda propr cantor}
           \lim_{R\to+\infty} \frac{dD^c(\psi_{R}\circ u)}{d|D^c(\psi_{R}\circ u)|}=\frac{dD^cu}{d|D^cu|} \quad\text{ $|D^cu|$-a.e. in $ A$}.
        \end{equation}
        \end{itemize}
\end{proposition}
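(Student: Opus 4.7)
\emph{Plan for (i).} The approach is to reduce to the BV chain rule (Theorem~\ref{thm:Chain Rule}) via a component-wise truncation. Let $K:=\mathrm{supp}(\phi)$, $R_0:=\max_{y\in K}|y|$, and pick any integer $m>R_0$. The starting point is the pointwise identity $\phi\circ u=\phi\circ u^{(m)}$ on $A$: if $|u_i(x)|\le m$ for every $i$, then $u^{(m)}(x)=u(x)$; if instead some $|u_j(x)|>m$, both $|u(x)|\ge|u_j(x)|>R_0$ and $|u^{(m)}(x)|\ge|u^{(m)}_j(x)|=m>R_0$ hold, so that $u(x),u^{(m)}(x)\notin K$ and $\phi$ vanishes at both. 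Since $u^{(m)}\in BV(A;\Rk)$, Theorem~\ref{thm:Chain Rule} yields
\[
D^c(\phi\circ u)=D^c(\phi\circ u^{(m)})=\nabla\phi(\widetilde{u^{(m)}})\,D^cu^{(m)}.
\]
To rewrite the right-hand side as $\nabla\phi(\widetilde u)\,D^cu$ I would split any Borel set $B\subset A$ along $A_m^\ast:=\{x\in A\setmeno J_u:\widetilde u(x)\text{ exists and }|\widetilde u(x)|\le m\}$. On $A_m^\ast$, Lemma~\ref{lemma:Dcu is a sup} gives $D^cu=D^cu^{(m)}$ and $\widetilde{u^{(m)}}=\widetilde u$, so the two integrands coincide. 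On $B\setmeno A_m^\ast$, after discarding the $D^c$-negligible set $J_u$ and the set where $\widetilde u$ does not exist, one has $|\widetilde u|>m>R_0$, whence $\nabla\phi(\widetilde u)=0$; a short case analysis (whether all $|\widetilde u_i|\le m$ or some $|\widetilde u_j|>m$) then shows $|\widetilde{u^{(m)}}|\ge m>R_0$, so that $\nabla\phi(\widetilde{u^{(m)}})=0$ as well.

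\emph{Plan for (ii).} Apply (i) to $\phi=\psi_R\in C^\infty_c(\Rk;\Rk)$, obtaining $D^c(\psi_R\circ u)=\nabla\psi_R(\widetilde u)\,D^cu$. By \eqref{eq:PsiR}, $\nabla\psi_R(y)=I_k$ on $\{|y|\le R\}$, while \eqref{eq:PsiRLip} gives $|\nabla\psi_R|_{\mathrm{op}}\le 1$. Writing $D^cu=\sigma\,|D^cu|$ with $|\sigma|=1$ \,$|D^cu|$-a.e.\ and using the Frobenius submultiplicativity $|AB|\le|A|_{\mathrm{op}}|B|$, one obtains the pointwise bound $|\nabla\psi_R(\widetilde u)\sigma|\le 1$ for $|D^cu|$-a.e.\ $x$, while the integrand equals $\sigma(x)$ whenever $|\widetilde u(x)|<R$. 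Dominated convergence in $R$ then delivers both \eqref{eq:limits cantor 1} and \eqref{eq:limits cantor 2}, the latter after computing the total variation via the density $|\nabla\psi_R(\widetilde u)\sigma|$ with respect to $|D^cu|$.

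\emph{Plan for (iii) and the main obstacle.} Applying (i) with $\phi=\psi_R$ and using that $\nabla\psi_R(\widetilde u)=I_k$ on $A_R$, the measures $D^c(\psi_R\circ u)$ and $D^cu$ coincide once restricted to $A_R$; hence their total variations coincide as well, and the Radon--Nikod\'ym densities match $|D^cu|$-a.e.\ on $A_R$, which is \eqref{eq:prima propr cantor}. For \eqref{seconda propr cantor}, recall that $|D^cu|$ vanishes on $J_u$ and on $\hd$-finite sets, so $\widetilde u$ is finite $|D^cu|$-a.e., and each such $x$ lies in $A_R$ for all sufficiently large $R$; \eqref{eq:prima propr cantor} then yields the pointwise limit. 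The essential difficulty is concentrated in (i): the BV chain rule is unavailable directly in $GBV_\star$, so one must exchange $u$ for $u^{(m)}$ and then reconstruct $\nabla\phi(\widetilde u)\,D^cu$ from $\nabla\phi(\widetilde{u^{(m)}})\,D^cu^{(m)}$. Matching on $A_m^\ast$ is immediate from Lemma~\ref{lemma:Dcu is a sup}, but the vanishing off $A_m^\ast$ hinges on the case analysis at the level of approximate limits described above.
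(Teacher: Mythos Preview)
Your proposal is correct and follows the same strategy as the paper: reduce (i) to the BV chain rule via the identity $\phi\circ u=\phi\circ u^{(m)}$ together with Lemma~\ref{lemma:Dcu is a sup}, then obtain (ii) by dominated convergence and (iii) from $\nabla\psi_R(\widetilde u)=I_k$ on $A_R$. The only minor deviation is in (iii), where you compare the restricted measures directly while the paper reaches the same conclusion via the Besicovitch differentiation theorem; your route is marginally more elementary but otherwise equivalent.
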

\begin{proof}
      To prove (i), we need to show that for every $B\in\B(A)$ we have  equality \begin{equation}\label{claim i}
          D^c(\phi \circ u)(B)=\nabla \phi(\widetilde{u})D^cu(B).
      \end{equation} 
      We claim that it is enough to prove \eqref{claim i} when $B$ is contained in  $A_m:=\{x\in A\colon \widetilde{u}(x) \text{ exists}$ $  \text{and } |\widetilde{u}(x)|\leq m|\}$, for some $m>0$. Indeed setting $\widetilde{A}:=\{x\in A\colon \widetilde{u}(x) \text{ exists}  \}$, by Proposition \ref{prop: Properties of GBVs scalar} we have that $\hd(\widetilde{A}\setminus J_u)=0$. Since $J_u$ is $\sigma$-finite with respect to $\hd$ and $\phi\circ u\in BV(A;\Rk)$, from these previous observations and (b) of Lemma \ref{lemma:Dcu is a sup} it follows that
      \begin{equation*}
          D^c(\phi \circ u)(A\setminus \widetilde{A})=\nabla\phi(\widetilde{u})D^cu(A\setminus \widetilde{A})=0.
          \end{equation*} Since $\widetilde{A}=\bigcup_{m>0}A_m$, the claim is proved. 

        Let us fix $m>\max_{y\in\textup{supp}(\phi)}|y|$ and $B\in A_m$.  
      To prove \eqref{claim i}, we begin by noting that we have the equality  $\phi\circ u=\phi\circ u^{(m)}$ $\Ld$-a.e. in $A$, so that
    \begin{equation}\nonumber \label{eq:derivata di cantor 1}
    D^c(\phi\circ u)(B)=D^c(\phi\circ u^{(m)})(B).
     \end{equation}
     Since $u^{(m)}\in BV(A;\Rk)$, by the Chain Rule \eqref{eq: Chain Rule for BV}  we have
    \begin{equation}\label{eq:derivata di cantor 2}
        D^c(\phi\circ u^{(m)})(B)=\nabla\phi(\widetilde{u^{(m)}})D^cu^{(m)}(B)=\nabla\phi(\widetilde{u}^{(m)})D^cu^{(m)}(B),
    \end{equation}
    where in the second equality we have used that $\widetilde{u^{(m)}}=\widetilde{u}^{(m)}$. Recalling that $B\subset A_m$, from Lemma \ref{lemma:Dcu is a sup} it follows that   $D^cu(B)=D^cu^{(m)}(B)$. Hence, by  \eqref{eq:derivata di cantor 2} and  $\phi\circ u=\phi\circ u^{(m)}$  $\Ld$-a.e. in $A$ , we get that
    \[
   D^c(\phi\circ u)(B)=D^c(\phi\circ u^{(m)})(B)=\nabla \phi(\widetilde{u}^{(m)})D^cu^{(m)}(B)=\nabla \phi(\widetilde{u})D^cu(B).
    \]
   This concludes the proof of (i).

   We now prove (ii). Let $R>0$. By (i) we have that 
   \begin{equation}
       \nonumber D^c(\psi_R\circ u)=\nabla \psi_R(\widetilde{u})D^{c}u
   \end{equation}
   as Radon measures on $A$. Since $\psi_R$ converges pointwise to the identity as $R\to+\infty$, applying the Dominated Convergence Theorem we deduce both \eqref{eq:limits cantor 1} and \eqref{eq:limits cantor 2}.

  Finally, we prove (iii). Let us fix $R>0$. By (i) and \eqref{eq:PsiR}, it is immediate to see that 
  \begin{equation*}
      \lim_{\rho\to 0^+}\frac{|D^c(\psi_R\circ u)|(B_\rho(x))}{|D^c u|(B_\rho(x))}=1
  \end{equation*}
  for every $x\in A_R$. Using this equality and the Besicovitch Derivation Theorem, we see that
   \begin{equation}\nonumber \label{eq:propo iniziale iii}
    \frac{dD^c(\psi_{R}\circ u)}{d|D^c(\psi_R\circ u)|}(x)=\frac{dD^{c}u}{d|D^cu|}(x) \quad \text{for $|D^cu|$-a.e. $x\in A_R$},
   \end{equation} 
   which proves \eqref{eq:prima propr cantor}. 
   
   To prove \eqref{seconda propr cantor}, we note that since $|D^cu|(A\setminus\bigcup_{R>0}A_R)=0$, passing to the limit for $R\to+\infty$ in \eqref{eq:prima propr cantor} we  obtain \eqref{seconda propr cantor}, concluding the proof.
\end{proof}

A straightforward consequence of this last proposition is that an analogue of the Alberti Rank-One Theorem (see \cite[Corollary 4.6]{alberti_1993} or \cite[Theorem 3.94]{AmbFuscPall}) holds in $GBV_\star(A,\Rk)$.

\begin{corollary}\label{prop:Alberti in GBVs}
    Let $u\in GBV_\star(A;\Rk)$. Then $dD^c u/d|D^cu|$ has rank one $|D^c u|$-a.e. in $A$.
\end{corollary}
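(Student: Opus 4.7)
The plan is to bootstrap the classical Alberti Rank-One Theorem from the bounded setting (applied to $\psi_R \circ u$) to the present generalised setting via the truncation results proved in Proposition \ref{prop:properties of CantorPart}.

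First I would fix $R>0$ and consider the truncated function $\psi_R \circ u$. By Proposition \ref{prop:characterisation} applied with $\phi = \psi_R$ (which is Lipschitz with compact support since $\psi_R$ vanishes outside $\{|y|\le \sigma R\}$), we get $\psi_R \circ u \in BV(A;\R^k)$. The classical Alberti Rank-One Theorem then guarantees that
\[
\frac{dD^c(\psi_R\circ u)}{d|D^c(\psi_R\circ u)|}(x) \text{ has rank one for } |D^c(\psi_R\circ u)|\text{-a.e. } x\in A.
\]

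Next I would transfer this property to $dD^cu/d|D^cu|$ on the set $A_R=\{x\in A:\widetilde u(x)\text{ exists and }|\widetilde u(x)|\le R\}$. Proposition \ref{prop:properties of CantorPart}(i) gives $D^c(\psi_R\circ u)=\nabla \psi_R(\widetilde u)D^cu$, and by \eqref{eq:PsiR} we have $\nabla\psi_R(\widetilde u(x)) = I$ whenever $x\in A_R$. Consequently the measures $D^c(\psi_R\circ u)$ and $D^cu$ (and hence their total variations) coincide when restricted to $A_R$, so a $|D^c(\psi_R\circ u)|$-null subset of $A_R$ is also $|D^cu|$-null. Combined with the identification \eqref{eq:prima propr cantor} of the two Radon-Nikod\'ym derivatives on $A_R$, this shows that $dD^cu/d|D^cu|$ has rank one for $|D^cu|$-a.e. $x\in A_R$.

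Finally I would conclude by letting $R\to+\infty$. Since $A_R\nearrow \{x\in A:\widetilde u(x)\text{ exists}\}$ and Lemma \ref{lemma:Dcu is a sup} implies $|D^cu|(A\setminus \bigcup_{R>0}A_R)=0$, the rank-one property extends to $|D^cu|$-almost all of $A$. The only delicate point in the plan is the measure-theoretic bookkeeping needed to convert the $|D^c(\psi_R\circ u)|$-a.e. conclusion of Alberti into a $|D^cu|$-a.e. conclusion on $A_R$; this is handled cleanly by the identity $D^c(\psi_R\circ u)\res A_R = D^cu\res A_R$ that falls out of Proposition \ref{prop:properties of CantorPart}(i). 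Alternatively, one could argue directly from \eqref{seconda propr cantor}: since the rank-at-most-one matrices form a closed subset of $\R^{k\times d}$ and $dD^c(\psi_R\circ u)/d|D^c(\psi_R\circ u)|$ has rank one $|D^cu|$-a.e.\ on each $A_R$, the $|D^cu|$-a.e.\ pointwise limit as $R\to+\infty$ inherits the rank-one property (the limit is never zero, having unit Frobenius norm).
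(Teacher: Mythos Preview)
Your proof is correct and follows essentially the same approach as the paper: apply Alberti's theorem to $\psi_R\circ u\in BV(A;\Rk)$, transfer the conclusion to $dD^cu/d|D^cu|$ on $A_R$ via Proposition~\ref{prop:properties of CantorPart}, and exhaust $A$ as $R\to+\infty$. Your alternative route through \eqref{seconda propr cantor} and the closedness of rank-at-most-one matrices is precisely the paper's argument (phrased there as ``lower semicontinuity of the rank''); your first route, identifying $D^c(\psi_R\circ u)\mres A_R=D^cu\mres A_R$ directly, is a slightly cleaner variant that avoids the limit altogether.
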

\begin{proof}
 If $u\in GBV_\star(A;\Rk)$, by Proposition \ref{prop:characterisation} we have that for every $R>0$ the function $\psi_R\circ u$ belongs to $BV(A;\Rk)$. The Alberti Rank-One Theorem  implies that $dD^c(\psi_R\circ u)/d|D^c(\psi_R\circ u)|$ has rank-one $|D^c(\psi_R\circ u)|$-a.e. in $A$ and thus $|D^cu|$-a.e. in $A_R$. Exploiting \eqref{seconda propr cantor} of Proposition \ref{prop:properties of CantorPart}, the claim follows from the lower semicontinuity of the rank. 
\end{proof}
\section{Lower semicontinuity and Compactness in \texorpdfstring{$GBV_\star(A;\Rk)$}{s}}

In this section we study the lower semicontinuity and coerciveness of some integral functionals defined on $GBV_\star(A;\Rk)$. 
In what follows, $c_1,....,c_4\geq 
0$ are positive constants such that 
\[0<c_1\leq 1\leq c_3.\]
Given $\xi \in\Rkd$, we recall that $|\xi|_{\rm op}$ denotes its operatorial norm.

We will consider functionals whose bulk density  $f:A\times\Rkd\to [0,+\infty)$ satisfies
    \begin{itemize}
         \item [(f1)] $f$ satisfies
        \begin{gather*}
            x\mapsto f(x,\xi) \quad \text{is Borel measurable for every  $\xi\in\Rkd$,}\\
            \xi\mapsto f(x,\xi) \quad \text{is continuous for every $x\in A$},
        \end{gather*}\vspace{-0.3 cm}
        \item [(f2)] $c_1|\xi|_{\rm op}-c_2\leq f(x,\xi)$ \text{ for every }$x\in A$ \text{and every }$\xi\in \Rkd$,
        \item  [(f3)] $f(x,\xi)\leq c_3|\xi|_{\rm op}+c_4$  \text{ for every }$x\in A$ \text{and every }$\xi\in \Rkd$,
    \end{itemize}
    and whose surface integrand $g:A\times\Rk\times\mathbb{S}^{d-1}\to[0,+\infty) $ 
   satisfies
    \begin{itemize}
        \item [(g1)] $g$ is a Borel measurable function,
        \item [(g2)] $c_1(|\zeta|\land 1)\leq g(x,\zeta,\nu)$ \text{ for every }$x\in A$, $\zeta\in \Rk$, $\nu\in\mathbb{S}^{d-1}$,
        \item  [(g3)] $g(x,\zeta,\nu)\leq c_3(|\zeta|\land 1)$  \text{ for every }$x\in A$, $\zeta\in \Rk$, $\nu\in\mathbb{S}^{d-1}$,
        \item [(g4)] $g(x,-\zeta,-\nu)=g(x,\zeta,\nu)$ \text{ for every }$x\in A$, $\zeta\in \Rk$, $\nu\in\mathbb{S}^{d-1}$.
    \end{itemize}
We recall the definition of recession function.
\begin{definition}
    For every $f\colon
    A \times\Rkd\rightarrow[0,+\infty)$, the recession function $f^\infty\colon A \times\Rkd\rightarrow[0,+\infty]$ is  defined as
    \begin{equation}\label{eq:defrecession}\nonumber 
        f^\infty(x,\xi):=\limsup_{t\to +\infty}\frac{f(x,t\xi) }{t}
    \end{equation}
   for every $x\in A$ and for every $\xi\in\Rkd$.
\end{definition}
\begin{remark}
    The function $\xi\to f^\infty(x,\xi)$ is positively $1$-homogeneous. If $f$ satisfies (f1) then $f^\infty$ is Borel measurable and if, in addition, it satisfies (f2) and (f3), then 
    \begin{equation*}\label{eq:Finfty bounds}
         c_1|\xi|_{\rm op}\leq f^\infty(x,\xi)\leq c_3|\xi|_{\rm op}\quad \text{for every  $x\in A $ \text{ and } $\xi\in\Rkd$}.
    \end{equation*}
\end{remark}
\begin{definition}\label{def:Functionals Efg}
 Given $f\colon A\times \Rkd\to [0,+\infty)$ satisfying (f1)-(f3) and $g\colon A\times \Rk\times \mathbb{S}^{d-1}\to [0,+\infty)$ satisfying (g1)-(g4)  we define
 \begin{equation}\label{eq:Def Efg}
     E^{f,g}(u):=\int_A\!f(x,\nabla u)\, dx+\int_Af^\infty\!\Big(x,\frac{dD^cu}{d|D^cu|}\Big)d|D^cu|+\int_{\jump{u}}g(x,[u],\nu_u)\, d\hd
 \end{equation}
  for every $u\in GBV_\star(A;\Rk)$. The definition of $E^{f,g}$ can then be extended to  $L^0(A;\Rk)$ by setting $E^{f,g}(u)=+\infty$ for every $u\in L^0(A;\Rk)\setminus GBV_\star(A;\Rk)$.
\end{definition}

Let $f(x,\xi)=|\xi|_{\rm op}$ and $g(x,\zeta,\nu)=|\zeta|\land 1$. We denote by $V$ the functional $E^{f,g}$ obtained with these choices of $f$ and $g$. Note that in this case $f^\infty(x,\xi)=|\xi|_{\rm op}$, so that 
\[V(u)=\int_A |\nabla u|_{\rm op}\, dx+|D^cu|_{\rm op}(A)+\int_{\jump{u}}|[u]|\land 1\, d\hd\]
for every $u\in GBV(A;\Rk)$.

In \cite[Theorem 2.1]{Braides1995}, the authors prove through a slicing argument a lower semicontinuity result for functionals on the space $\BVsc$. By carefully revisiting their proof, we are able to prove the lower semicontinuity  with respect of the convergence in measure of a subclass of functionals of type $E^{f,g}$, which includes the functional $V$. We recall that a sequence $(u_n)_n\subset BV(A;\Rk)$ is said to converge weakly$^*$ in $BV$ to $u\in BV(A;\Rk)$ if $u_n\to u$ in $L^1(A;\Rk)$ and $Du_n\overset{*}{{\rightharpoonup}}Du$ weakly$^*$ in the sense of $\Rkd$-valued bounded Radon measures.
\begin{lemma}\label{lemma:Lowersemicontinuity weakly in BV}
    Let $\hat{f}\colon[0,+\infty)\rightarrow[0,+\infty)$ be a convex, non-decreasing, lower semicontinuous  function such that 
    \[\hat{f}^\infty(1)=\lim_{t\to+\infty}\frac{\hat{f}(t)}{t}=C.
    \] Let $\hat{g}\colon[0,+\infty)\rightarrow[0,+\infty)$ be a lower semicontinuous function satisfying the subadditivity condition
    \[\hat{g}(a+b)\leq \hat{g}(a)+\hat{g}(b) \quad \text{for every $a,b\in\R$},\]
    and suppose in addition that 
    \[\hat{g}^0(1)=\lim_{t\to 0^+}\frac{\hat{g}(t)}{t}=C.\]
Set $f:=\hat{f}(|\cdot|_{\rm op})$ and $g:=\hat{g}(|\cdot|)$.
    Then the functional defined for $u\in \BVvect$ as 
    \begin{equation}\nonumber  E^{f,g}(u):=\int_A \hat{f}(|\nabla u |_{\rm op})\, dx+C|D^cu|_{\rm op}(A)+\int_{\jump{u}}\hat{g}(|[u]|)\,d\hd \end{equation}
    is weakly$^*$ lower semicontinuous in $BV(A;\Rk)$.
\end{lemma}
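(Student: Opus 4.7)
My plan is to follow the slicing strategy of \cite{Braides1995}: reduce the lower semicontinuity of $E^{f,g}$ to a one-dimensional version, and then recover the operator-norm functional on $A$ as a supremum of sliced lower semicontinuous functionals indexed by directions $\nu\in\Sn^{d-1}$. The first step is to show that for every open interval $I\subset\R$ the one-dimensional functional
\[ F(v):=\int_I\hat f(|v'|)\,dt+C|D^cv|(I)+\int_{J_v}\hat g(|[v]|)\,d\mathcal{H}^0 \]
is weakly$^*$ lower semicontinuous on $BV(I;\Rk)$. In one dimension the Euclidean and operator norms on matrices coincide and no choice of direction enters the picture, so this is the vector-valued counterpart of \cite[Theorem 2.1]{Braides1995}; the scalar proof extends essentially verbatim thanks to the matching condition $\hat f^\infty(1)=\hat g^0(1)=C$, which glues the three parts of $Dv$ (absolutely continuous, Cantor and jump) into a single lower semicontinuous integrand on measures.

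Next, for a relatively compact open set $U\subset\subset A$ and $\nu\in\Sn^{d-1}$ I would introduce the localised sliced functional
\[ G_{U,\nu}(u):=\int_U\hat f(|(\nabla u)\nu|)\,dx+C|(D^cu)\nu|(U)+\int_{J_u\cap U}\hat g(|[u]|)\,|\nu_u\cdot\nu|\,d\hd. \]
Using Proposition \ref{prop:Slicing}(b), Fubini, and the area formula for the projection $J_u\to\Pi_\nu$ one rewrites this as
\[ G_{U,\nu}(u)=\int_{\Pi_\nu}F(u^\nu_y;U^\nu_y)\,d\hd(y), \]
where $F(\,\cdot\,;J)$ denotes the one-dimensional functional above localised on an open set $J\subset\R$. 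Combining Proposition \ref{prop:Slicing}(c) with Fatou's lemma and the lsc of Step 1 then gives that $u\mapsto G_{U,\nu}(u)$ is weakly$^*$ lsc on $BV(A;\Rk)$. To conclude, I would establish the supremum representation
\[ E^{f,g}(u)=\sup\sum_{i=1}^n G_{U_i,\nu_i}(u), \]
where the sup is taken over finite families of pairwise disjoint, relatively compact, open sets $(U_i)_{i=1}^n$ in $A$ and directions $(\nu_i)_{i=1}^n\subset\Sn^{d-1}$: the bulk part uses the identity $\hat f(|\xi|_{\rm op})=\sup_\nu\hat f(|\xi\nu|)$ (monotonicity of $\hat f$) and measurable selection of the optimal direction; the Cantor part uses the very definition of the operator-norm total variation as a sup over partitions and directions; the jump part uses $|\nu_u\cdot\nu|\leq 1$ with equality when $\nu=\pm\nu_u$, together with a simple-function approximation of the Borel map $\nu_u$. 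Since a supremum of lsc functionals is lsc, this yields the thesis.

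The main obstacle I foresee is Step 1, i.e.\ adapting the proof of \cite[Theorem 2.1]{Braides1995} to $\Rk$-valued functions: the compatibility condition $\hat f^\infty(1)=\hat g^0(1)=C$ is essential since in the limit small jumps may be approximated by diffuse densities and vice versa, and the norm-based nature of the integrands forces one to handle the Euclidean norms $|v'|$ and $|[v]|$ globally rather than componentwise (a componentwise reduction would lose the interaction between directions induced by $\hat g$). A more technical but not conceptually difficult point is the rigorous verification of the sup representation in Step 3 for the jump term, which requires a Lusin-type approximation of $\nu_u$ together with the inner regularity of the Radon measures involved in order to restrict to open partitions of $A$.
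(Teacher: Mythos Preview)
Your approach is essentially the same as the paper's: slicing via Proposition~\ref{prop:Slicing}, one-dimensional lower semicontinuity, and a supremum representation \`a la \cite[Lemma~2.4]{Braides1995} over a countable dense family of directions in $\Sn^{d-1}$. The only notable simplification is that the paper does not re-prove the one-dimensional result but invokes \cite[Theorem~3.3]{Buttazzo-Bouchitte} directly (which already covers $\Rk$-valued $BV$ functions of one real variable), so the ``main obstacle'' you flag in Step~1 in fact disappears.
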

\begin{proof}The proof follows closely that of \cite[Theorem 2.1]{Braides1995}. For the reader's convenience, we give a sketch of the proof.

  For every $\nu\in\mathbb{S}^{d-1}$, we introduce the functional $E^{f,g}_\nu\colon \BVvect\times\mathcal{B}(A)\to [0,+\infty)$ defined for every $B\in \mathcal{B}(A)$ and $u\in\BVvect$ by 
  \[ E^{f,g}_\nu(u,B)=\int_B \hat{f}(|(\nabla u)\nu |)\, dx+C|(D^cu) \nu|(B)+\int_{\jump{u}\cap B}\hat{g}(|[u]|)|\nu_u\cdot \nu |d\hd.\]

  We want to prove that for every $U\in\mathcal{A}(A)$, the functional
  $E^{f,g}_\nu(\cdot,U)$ is lower semicontinuous with respect to the weak$^*$ convergence in $BV(A;\Rk)$.
  
  Since for every $u\in BV(A;\Rk)$ the set function $E^{f,g}_\nu(u,\cdot)$ is a non-negative Borel measure such that $E^{f,g}_\nu(u,B)=0$ whenever $\Ld(B)+|D^cu|(B)$ $+|D^ju|(B)=0$, it is enough to check that $E^{f,g}_\nu(\cdot,Q)$  is lower semicontinuous when $Q=\Sigma\times I$ for an open $\Sigma\subset \Pi_\nu=\{x\cdot \nu=0\}$   and $I\in\A(\R)$.
  
  Thanks to Proposition \ref{prop:Slicing} and arguing exactly as in \cite[Lemma 2.3]{Braides1995}, we deduce that, for every open $\Sigma\subset \Pi_\nu$ and  $I\in\A(\R)$, setting $Q=\Sigma\times I$, we have 
  \begin{equation}\label{eq:identity functional Fv and integral of 1d slice}
  E^{f,g}_\nu(u,Q)=\int_\Sigma\Phi(u_y^\nu,I)\,d\hd(y)\,\,\,\text{ for every $u\in BV(A;\Rk)$,}
   \end{equation}
  where $\Phi\colon BV(\R;\Rk)\times\mathcal{B}(\R)\to [0,+\infty]$ is the functional defined for every $I\in\mathcal{B}(\R)$ and
$v\in BV(\R;\Rk)$ as 
  \[\Phi(v,I)=\int_I \hat{f}(|\nabla v|_{\rm op})\, dx+C|D^cv|(I)+\int_{\jump{v}\cap I}\hat{g}(|[v]|)\,d\mathcal{H}^0 \]
  and where for every $y\in\Pi_\nu$ the function  $u^\nu_y$ is  defined by \eqref{eq:slice}.
  
 Let $u\in \BVvect$ and  $(u_n)_n\subset\BVvect$ be such that $u_n\rightharpoonup u $ weakly$^*$ in $\BVvect$. Thanks to Proposition \ref{prop:Slicing}(c), we have that for $\hd$-a.e. $y\in \Sigma$
  \[(u_n)_y^\nu\rightharpoonup u_y^\nu\text{ weakly$^*$ in $BV(I;\Rk)$. }\]
  Thanks to \cite[Theorem 3.3]{Buttazzo-Bouchitte}, we know that  $\Phi$ is weakly$^*$ lower semicontinuous on $BV(\R;\Rk)$, so that Fatou's Lemma and \eqref{eq:identity functional Fv and integral of 1d slice} imply that
\begin{equation*}\label{eq:l.s.c of Fnu on cilinders}
    E^{f,g}_\nu(u,Q)\leq\liminf_{n\to\infty}E^{f,g}_\nu(u_n,Q),
\end{equation*}

 Fix now $(\nu_i)_{i\in\N}$ a dense family of unit vectors in $\mathbb{S}^{d-1}$. To prove that $E^{f,g}$ is weakly$^*$ lower semicontinuous in $\BVvect$, we show that it can be written as the supremum of finite sums of functionals of the form $E^{f,g}_{\nu_i}$, which we have just shown to be lower semicontinuous. More precisely, repeating the proof of \cite[Lemma 2.4]{Braides1995} replacing 
 their measure $\mu$ by
 \begin{equation}
     \nonumber
     \mu=\Ld+|D^cu|_{\rm op} +\hd\mres J_u, 
 \end{equation}
 their function $\psi$ by 
 \begin{equation*}\psi(x)=
     \begin{cases}
         \hat{f}(|\nabla u(x)|_{\rm op}) &\Ld\text{-}a.e\,\, \text{in }A, \\
         C &|D^cu|\text{-}a.e.\text{ in }A,\\
         \hat{g}(|[u]|)  & \text{ $\hd$-a.e. in }\jump{u},
     \end{cases}\\
     \end{equation*}
    and their function  $\psi_h$ by 
    \begin{equation*}
     \psi_i(x)=\begin{cases}
          \hat{f}(|(\nabla u) \nu_i|) &\Ld\text{-}a.e\,\, \text{in }A, \\
          C|(\frac{dD^cu}{d|D^cu|}) \nu_i|&|D^cu|\text{-}a.e.\text{ in } A,\\
         \hat{g}(|[u]|)|\nu_u\cdot\nu_i|  & \hd \text{-}a.e.\text{ in }\jump{u},
     \end{cases}
\end{equation*}
we get 
\begin{eqnarray*} E^{f,g}(u)=\sup\Big\{\sum_{i=1}^nE^{f,g}_{\nu_i}(u,A_i):&& \!\!\!\!\!\!\!\!\!\!n\in\mathbb{N} \text{ and}\\&&\hspace{-1.7 cm}(A_i)_{i=1}^n\,\,\ \text{pairwise disjoint open subsets of $A$} \Big\},
\end{eqnarray*}
concluding the proof.
\end{proof}

\begin{theorem}\label{thm:lowersemicontinuity}
    Let $\hat{f},\hat{g}$ be as in the statement of Lemma \ref{lemma:Lowersemicontinuity weakly in BV}. Assume  that $f:=\hat{f}(|\cdot|_{\rm op})$ satisfies \text{\rm (f1)-(f3)} and  that $g:=\hat{g}(|\cdot|)$ satisfies \text{\rm (g1)-(g3)}
    and that $\hat{g}$ is non-decreasing.
    Then the functional $E^{f,g}$ of Definition \ref{def:Functionals Efg} is lower semicontinuous with respect to the topology of $L^0(A;\Rk)$.
\end{theorem}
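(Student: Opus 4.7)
My strategy is to reduce the lower semicontinuity in $\GBVsvector$ (with respect to the $L^0$-topology) to the weak$^*$ lower semicontinuity in $\BVvect$ established in Lemma \ref{lemma:Lowersemicontinuity weakly in BV}, using the compactly supported Lipschitz truncations $\psi_R$ from \eqref{def:psiR} as a bridge. Given $(u_n)_n\subset\GBVsvector$ with $u_n\to u$ in $L^0(A;\Rk)$, I may assume, after extracting a subsequence, that $\sup_n E^{f,g}(u_n)\leq M<+\infty$ (otherwise there is nothing to prove) and that the $\liminf$ is a limit. I will then establish the chain
\begin{equation*}
E^{f,g}(u)\leq \liminf_{R\to+\infty}E^{f,g}(\psi_R\circ u)\leq \liminf_{R\to+\infty}\liminf_{n\to+\infty}E^{f,g}(\psi_R\circ u_n)\leq\liminf_{n\to+\infty}E^{f,g}(u_n).
\end{equation*}
The growth conditions (f2) and (g2), together with the equivalence of the Frobenius and operator norms, turn $\sup_n E^{f,g}(u_n)\leq M$ into a uniform bound on the quantities in the Remark following Proposition \ref{prop:Subspace of GBV}. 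Applying Proposition \ref{prop:characterisation} with $\phi=\psi_R$ (whose Lipschitz constant is at most $1$ by \eqref{eq:PsiRLip}) yields a uniform $BV$-bound on $(\psi_R\circ u_n)_n$ for every fixed $R$. Since $\|\psi_R\|_{L^\infty}\leq \sigma R$ and $\psi_R$ is $1$-Lipschitz, $u_n\to u$ in measure implies $\psi_R\circ u_n\to \psi_R\circ u$ in measure, and then, by dominated convergence, in $L^1(A;\Rk)$; combined with the $BV$ bound this gives $\psi_R\circ u_n\rightharpoonup \psi_R\circ u$ weakly$^*$ in $\BVvect$, so that the converse direction of Proposition \ref{prop:characterisation} also yields $u\in\GBVsvector$.

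The middle inequality of the chain is an immediate consequence of Lemma \ref{lemma:Lowersemicontinuity weakly in BV}. The rightmost inequality reduces to showing $E^{f,g}(\psi_R\circ v)\leq E^{f,g}(v)$ for every $v\in\GBVsvector$ and every $R>0$, which I verify term by term. For the bulk term, $|\nabla(\psi_R\circ v)|_{\rm op}\leq|\nabla v|_{\rm op}$ follows from $\nabla(\psi_R\circ v)=\nabla\psi_R(v)\nabla v$ and $|\nabla \psi_R|_{\rm op}\leq 1$; monotonicity of $\hat f$ concludes. For the Cantor term, the formula $D^c(\psi_R\circ v)=\nabla \psi_R(\widetilde v)D^cv$ from Proposition \ref{prop:properties of CantorPart}(i) and $|\nabla\psi_R|_{\rm op}\leq 1$ yield $|D^c(\psi_R\circ v)|_{\rm op}(A)\leq |D^cv|_{\rm op}(A)$, and since $f^\infty(x,\xi)=\hat f^\infty(1)|\xi|_{\rm op}=C|\xi|_{\rm op}$ both Cantor terms equal $C$ times the respective operator total variation. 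For the jump term, Proposition \ref{prop:psi sui salti}(ii) ensures $J_{\psi_R\circ v}\subset J_v$ and $|[\psi_R\circ v]|\leq |[v]|$ on their common domain, whence the monotonicity of $\hat g$ gives the inequality.

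The main obstacle is the leftmost inequality $E^{f,g}(u)\leq \liminf_{R\to+\infty}E^{f,g}(\psi_R\circ u)$, which demands a careful term-by-term passage to the limit. By Proposition \ref{prop:psi sui salti}(i), $\nabla(\psi_R\circ u)=\nabla u$ $\Ld$-a.e.~on $\{|u|\leq R\}$, and these sets exhaust $A$ up to an $\Ld$-null set (since $u$ is $\Ld$-a.e.~finite); Fatou's lemma then delivers the bulk part. For the Cantor term, the identity $D^c(\psi_R\circ u)=\nabla \psi_R(\widetilde u)D^cu$ shows that $D^c(\psi_R\circ u)$ and $D^cu$ agree on $A_R=\{x\in A\colon \widetilde u(x)\text{ exists and }|\widetilde u(x)|\leq R\}$ (because $\nabla\psi_R$ is the identity there), and Lemma \ref{lemma:Dcu is a sup} ensures $|D^cu|(A\setmeno \bigcup_R A_R)=0$, so that monotone convergence yields $|D^c(\psi_R\circ u)|_{\rm op}(A)\geq|D^cu|_{\rm op}(A_R)\to|D^cu|_{\rm op}(A)$. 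For the jump term, Proposition \ref{prop:psi sui salti}(ii) shows that for $\hd$-a.e.~$x\in J_u$ one has $[\psi_R\circ u](x)=[u](x)$ for all $R$ sufficiently large, hence Fatou's lemma combined with the monotonicity of $\hat g$ concludes. Assembling the three limits completes the proof.
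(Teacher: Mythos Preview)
Your proof is correct and follows essentially the same route as the paper: truncate with $\psi_R$, upgrade $L^0$-convergence to weak$^*$ $BV$-convergence via the uniform bound from Proposition \ref{prop:characterisation}, apply Lemma \ref{lemma:Lowersemicontinuity weakly in BV}, use $\textup{Lip}(\psi_R)\leq 1$ together with the monotonicity of $\hat f,\hat g$ and Proposition \ref{prop:properties of CantorPart}(i) to get $E^{f,g}(\psi_R\circ u_n)\leq E^{f,g}(u_n)$, and finally let $R\to+\infty$ via Fatou and Propositions \ref{prop:psi sui salti}, \ref{prop:properties of CantorPart}. Your presentation as a three-term chain is slightly more explicit than the paper's, and your remark that the converse part of Proposition \ref{prop:characterisation} yields $u\in\GBVsvector$ is a harmless extra (the statement already takes $u\in\GBVsvector$), but the argument is otherwise the same.
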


\begin{proof}

   We consider $u\in L^0(A;\Rk)$ and a sequence  $(u_n)_n\subset L^0(A;\Rk)$ converging to $u$ in $L^0(A;\Rk)$. If $\liminf_n E^{f,g}(u_n,A)=+\infty$ there is nothing to prove. Thus, we suppose that $\sup_{n\in\N}E^{f,g}(u_n)\leq M$ for some $M>0$. Since $f$ and $g$ satisfy (f2) and (g2), we have  that $\sup_{n\in\N}V(u_n)\leq M'$, for $M'$ a constant depending only on $M.$ With arguments similar to those used in the proof of Proposition \ref{prop:characterisation}, one can show that the previous condition implies that $u\in GBV_\star(A;\Rk)$ as well.

    For every $R>0$ and every $n\in\N$ we set $v_n^R:=\psi_R\circ u_n$, where $\psi_R$ is the function defined by \eqref{eq:defpsi}. By Proposition \ref{prop:characterisation} we have that $v_n^R\in BV(A;\Rk)\cap L^\infty(A;\Rk)$. Arguing as in the proof of Proposition \ref{prop:characterisation} we see that there exists $K=K(R,M')>0$ such that
    \begin{equation}\label{eq:vRn is uniformly bounded}
    |Dv_{n}^R|(A)\leq  K.   
    \end{equation}
   
   The sequence $v_{n}^R$ converges to $\psi_R\circ u$ in $L^1(A;\Rk)$ as $n\to+\infty$, so that from \eqref{eq:vRn is uniformly bounded} it follows that $v_{n}^R$ converges to $\psi_R\circ u$ weakly$^*$ in $\BVvect$ as well. We can then use
   Lemma \ref{lemma:Lowersemicontinuity weakly in BV} to obtain
    \begin{equation}\label{eq:lowersemicontinuity befor passing to the limit in m}
   E^{f,g}(\psi_R\circ u)\leq\liminf_{n\to+\infty} E^{f,g}(v_{n}^R)
   \end{equation}
   for every $R>0$.
   By the chain rule \eqref{eq: Chain Rule for BV}, we may estimate
    \begin{align*}
        \displaystyle E^{f,g}(v_{n}^R)&=\int_A \hat{f}(|\nabla \psi_R(u_n)\nabla u_n|_{\rm op})\, dx+C|\nabla \psi_R(\widetilde{u}_n)D^cu_n|_{\rm op}(A) \\
        \displaystyle &\hspace{0.5 cm}+ \int_{\jump{u}}\hat{g}(|[\psi_R(u_n)]|)\, d\hd\\
        &\leq \int_A \hat{f}(|\nabla u_n|_{\rm op})\, dx+C|D^cu_n|_{\rm op}(A) 
        \displaystyle +\int_{\jump{u}}\hat{g}(|[u_n]|)\, d\hd,
    \end{align*}
    where in the last inequality we have used  that $\hat{f}$ and $\hat{g}$ are non-decreasing, \eqref{eq:PsiRLip}, and Proposition \ref{prop:psi sui salti}. From this last estimate and \eqref{eq:lowersemicontinuity befor passing to the limit in m}, we get 
    \begin{equation}\label{eq: Per concludere lower}
        E^{f,g}(\psi_R\circ u)\leq\liminf_{n\to+\infty}E^{f,g}(u_n).
    \end{equation}
   
    Recalling Proposition \ref{prop: Properties of GBVs scalar}(b), (d) and taking advantage of the lower semicontinuity of $\hat{f}$ and of $\hat{g}$, we get
    \begin{eqnarray}
      & \label{eq:lower f}\displaystyle \hat{f}(|\nabla u|_{\rm op})\leq\liminf_{R\to+\infty} \hat{f}(|\nabla (\psi_R\circ u)|_{\rm op})\quad  &\Ld\text{-}a.e\,\, \text{ in } A,\\
      &\displaystyle \label{eq:lower g}\hat{g}(|[u]|)\leq\liminf_{R\to+\infty} \hat{g}(|[\psi_R\circ u]|)\quad  &\hd\text{-} a.e\,\, \text{ in } \jump{u}.
    \end{eqnarray}

   Finally, \eqref{eq:lower f}, \eqref{eq:lower g}, and \eqref{eq:limits cantor 2}, together with Fatou's Lemma and \eqref{eq: Per concludere lower}, imply
    \[E^{f,g}(u)\leq\liminf_{n\to+\infty} E^{f,g}(u_n),\]
   concluding the proof.
\end{proof}
\begin{corollary}
  The functional $V$ is lower semicontinuous with respect to the topology of $L^0(A;\Rk)$. 
\end{corollary}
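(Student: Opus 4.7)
The plan is to recognize the functional $V$ as a special case of the general functional $E^{f,g}$ from Definition \ref{def:Functionals Efg}, with a specific choice of $\hat{f}$ and $\hat{g}$, and then simply verify the hypotheses needed to apply Theorem \ref{thm:lowersemicontinuity}.

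Concretely, I would set $\hat{f}(t) := t$ and $\hat{g}(t) := t \land 1$ for $t \geq 0$, and observe that the corresponding $f(x,\xi) = \hat{f}(|\xi|_{\rm op}) = |\xi|_{\rm op}$ and $g(x,\zeta,\nu) = \hat{g}(|\zeta|) = |\zeta|\land 1$ are exactly the integrands defining $V$. Since $f^\infty(x,\xi) = |\xi|_{\rm op}$ and $\hat{f}^\infty(1) = 1 = \lim_{t\to 0^+}(t\land 1)/t = \hat{g}^0(1)$, the Cantor term $|D^c u|_{\rm op}(A)$ matches the Cantor term in the definition of $E^{f,g}$ with the common constant $C=1$. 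Hence $V = E^{f,g}$ for these choices.

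Next I would verify in turn the assumptions of Theorem \ref{thm:lowersemicontinuity}. The function $\hat{f}(t) = t$ is convex, non-decreasing, and continuous; the function $\hat{g}(t) = t\land 1$ is continuous, non-decreasing, and satisfies the subadditivity $\hat{g}(a+b) \leq \hat{g}(a) + \hat{g}(b)$ for $a,b \geq 0$ (which is immediate: if $a+b \leq 1$ both sides coincide, otherwise $(a+b)\land 1 = 1 \leq (a\land 1)+(b\land 1)$ whenever $a+b > 1$). The growth conditions (f1)-(f3) hold trivially with $c_1 = c_3 = 1$ and $c_2 = c_4 = 0$, and (g1)-(g3) hold with the same constants $c_1 = c_3 = 1$; (g4) is immediate since $|-\zeta| = |\zeta|$ and $g$ does not depend on $\nu$. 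Finally, $\hat{g}$ is non-decreasing, as required.

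All hypotheses being verified, Theorem \ref{thm:lowersemicontinuity} applies and yields the lower semicontinuity of $V = E^{f,g}$ with respect to the $L^0(A;\R^k)$ topology, which is precisely convergence in measure. There is no real obstacle here; the corollary is a direct instantiation of the general theorem, and the only thing to check carefully is that the recession constant $\hat{f}^\infty(1)$ and the limit $\hat{g}^0(1)$ coincide with the coefficient of the Cantor part in $V$, which for the canonical choice above is simply $1$.
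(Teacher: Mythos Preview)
Your proposal is correct and mirrors the paper's approach exactly: the paper does not even write out a proof for this corollary, since $V$ was \emph{defined} as $E^{f,g}$ for the choices $f(x,\xi)=|\xi|_{\rm op}$ and $g(x,\zeta,\nu)=|\zeta|\land 1$, so the result is an immediate instantiation of Theorem~\ref{thm:lowersemicontinuity} once one notes that $\hat f(t)=t$ and $\hat g(t)=t\land 1$ satisfy all the required hypotheses, which you have verified correctly.
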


In \cite[Theorem 3.11]{DalToa22} a useful compactness theorem for $GBV_\star(A)$ is proved. We now show that the result readily adapts to $GBV_\star(A;\Rk)$.
\begin{theorem}\label{thm:Compactness naive}
Let $(u_n)_n$ be a sequence in $GBV_\star(A;\Rk)$. Suppose that there exist a constant $M>0$ and an increasing continuous function $h\colon[0,+\infty)\rightarrow[0,+\infty)$ with $h(t)\to+\infty$ as $t\to+\infty$, such that
\begin{eqnarray}& \notag \displaystyle
\sup_n V(u_n)\leq M,
\\&  \displaystyle\label{eq:increasingpsicompactness}\sup_n\int_A h(|u_n|)\, dx<+\infty.
\end{eqnarray}
Then there exists a subsequence, not relabelled, and $u\in GBV_\star(A;\Rk)$ such that $u_n\to u$ $\mathcal{L}^d$-a.e. in $A$. 
\end{theorem}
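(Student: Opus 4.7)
The plan is to reduce to the scalar compactness result \cite[Theorem 3.11]{DalToa22} by working component by component, then reassemble the limit.

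First I would verify that the hypotheses of the scalar theorem hold for each of the $k$ components $(u_n)_i$. For the integral condition, since $h$ is non-decreasing and $|(u_n)_i(x)|\leq |u_n(x)|$ pointwise, one has $h(|(u_n)_i|)\leq h(|u_n|)$ $\Ld$-a.e., so $\sup_n\int_A h(|(u_n)_i|)\,dx<+\infty$ follows from \eqref{eq:increasingpsicompactness}. For the $V$-bound, I would observe that for any matrix $\xi\in\Rkd$ the $i$-th row $\xi_i$ satisfies $|\xi_i|\leq |\xi|_{\rm op}$ (take $\nu=\xi_i/|\xi_i|$ in the definition of operator norm), from which it follows that for the matrix-valued measure $D^cu_n$ with rows $D^cu_{n,i}$ (Definition \ref{def:Cantor part}) the row total variations are dominated by the operatorial total variation: $|D^cu_{n,i}|(A)\leq |D^cu_n|_{\rm op}(A)$. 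Combined with $|\nabla (u_n)_i|\leq|\nabla u_n|_{\rm op}$ and the elementary inequalities $|[(u_n)_i]|\land 1\leq |[u_n]|\land 1$ together with $J_{(u_n)_i}\subset J_{u_n}$ up to $\hd$-negligible sets, one gets $V((u_n)_i)\leq V(u_n)\leq M$ for every $n\in\N$ and every $i\in\{1,\dots,k\}$.

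Next, I would apply the scalar compactness result of \cite[Theorem 3.11]{DalToa22} to the sequence $((u_n)_1)_n\subset\GBVsscalar$ to extract a subsequence (not relabelled) and a limit $u_1\in\GBVsscalar$ with $(u_n)_1\to u_1$ $\Ld$-a.e. in $A$. Repeating this extraction successively for the remaining components $i=2,\dots,k$, each time restricting to the previously extracted subsequence, I would obtain at the $k$-th step a single subsequence (still not relabelled) and functions $u_1,\dots,u_k\in\GBVsscalar$ with $(u_n)_i\to u_i$ $\Ld$-a.e. in $A$ for every $i\in\{1,\dots,k\}$. Setting $u:=(u_1,\dots,u_k)$, we then have $u_n\to u$ $\Ld$-a.e. in $A$ and, by Definition \ref{def:GBVstar}, $u\in\GBVsvector$, which is exactly what is required.

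I do not expect any essential obstacle: once the component-wise domination of the relevant quantities by the operatorial norm is established, the vectorial statement follows by a routine diagonal extraction from its scalar counterpart. The only delicate point worth double-checking is the inequality $|D^cu_n|_{\rm op}(A)\geq |D^cu_{n,i}|(A)$, which reduces via the defining supremum in item (d) of Section 2 to the pointwise inequality $|\xi_i|\leq |\xi|_{\rm op}$ recalled above.
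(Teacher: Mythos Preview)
Your proposal is correct and follows exactly the approach of the paper, which simply states that it suffices to apply \cite[Theorem 3.11]{DalToa22} to each component. You have filled in the details the paper omits (the componentwise domination of the gradient, Cantor, and jump terms by their operatorial counterparts, and the successive subsequence extraction), and these verifications are all sound.
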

\begin{proof}
    It is enough to apply Theorem \cite[Theorem 3.11]{DalToa22} to each component of $u$.
\end{proof}

 We now present a result which shows that given $(u_n)_n\subset\GBVsvector$  which is only bounded in energy and which satisfies some common Dirichlet boundary condition, it is possible to produce a modification $y_n$ of $u_n$, satisfying the hypotheses of Theorem \ref{thm:Compactness naive}. This result is a direct adaptation of \cite[Theorem 5.5]{DalToa22} and of \cite[Theorem 7.13]{DalToa23}, whose proof is based on the  arguments of \cite{friedrich2019compactness}.

In the following  $A'\subset\subset A$ is an open set with Lipschitz boundary. We also assume $A$ to have Lipschitz boundary.
 We fix an additional positive constant $c_5>0$ satisfying $c_5\geq c_3/c_1$ and make  the following additional assumption on the integrand $g$:
\begin{itemize}
    \item [(g5)]for every $\zeta_1,\zeta_2\in\Rk$ with $c_5|\zeta_1|\leq |\zeta_2|$ it holds $g(x,\zeta_1,\nu)\leq g(x,\zeta_2,\nu)$ for every $x\in\Rd$, $\nu\in\mathbb{S}^{d-1}$.
\end{itemize}
Note that this condition, first considered in \cite{Cagnetti}, is crucial in the proof of the result.


\begin{theorem}\label{thm:Compactness}
   Let  $f\colon A\times \Rkd\to [0,+\infty)$  and  $g\colon A\times \Rk\times \mathbb{S}^{d-1}\to [0,+\infty)$ be two functions satisfying {\rm (f1)-(f3)} and {\rm (g1)-(g5)}, respectively, and let $E^{f,g}$ be the functional introduced in Definition \ref{def:Functionals Efg}. Let $w\in W^{1,1}(A;\Rk)$ and let $ (u_n)_n\subset GBV_\star(A;\Rk)$ with $u_n=w$ $\Ld$-a.e. on $A\setmeno\overline{A'}$, and $V(u_n,A)\leq M$ for every $n\in\mathbb{N}$. Then for every $\e_n\to 0^+$ there exists a subsequence of $(u_n)_n$, not relabelled, modifications $y_n\in GBV_\star(A;\Rk)$ of $u_n$, with $y_n=w$ on $A\setmeno\overline{A'}$, and a continuous increasing function $h:[0,+\infty)\rightarrow[0,+\infty)$ satisfying $h(t)\to +\infty$ for $t\to +\infty$, such that 
    \begin{eqnarray}
        &\label{eq: yk is minimizing}\displaystyle E^{f,g}(y_n)\leq E^{f,g}(u_n)+\e_n,\\
        &\label{eq:Existence of psi}\displaystyle \sup_n\int_{A}h(|y_n|)\, dx<+\infty.
    \end{eqnarray}
\begin{proof}
The theorem follows from the same lines of proof of \cite[Theorem 5.5]{DalToa22} outlined in \cite[Section 5]{DalToa22}, performing the modifications suggested in \cite[Theorem 7.13]{DalToa23} and replacing \cite[Lemma 5.1]{DalToa22} by our Lemma \ref{lemma:Linftyapprox} below.
\end{proof}
\end{theorem}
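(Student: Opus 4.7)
My strategy would be to follow the scheme of \cite[Theorem 5.5]{DalToa22} and \cite[Theorem 7.13]{DalToa23}, both of which rest on the Friedrich-type piecewise Korn--Poincar\'e decomposition of \cite{friedrich2019compactness}. The idea is that, modulo a bad set of small $\Ld$-measure and controlled perimeter, each $u_n$ coincides with a function which is componentwise in $W^{1,1}$ and on which a piecewise Poincar\'e inequality applies, while the Dirichlet datum $u_n=w$ on $A\setmeno\overline{A'}$ anchors the additive constants appearing in this inequality. The modification $y_n$ is then obtained by combining this decomposition with a Lipschitz truncation of the form $\psi_{R_n}\circ u_n$ for a well-chosen divergent sequence $R_n\to+\infty$, glued to $w$ on $A\setmeno\overline{A'}$.

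\textbf{Key steps.} First, I would set up the Friedrich decomposition at the level of the truncations $u_n^{(m)}\in\BVvect$ exactly as in \cite[Section 5]{DalToa22}, producing a Caccioppoli partition of $A$ whose total perimeter is bounded in terms of $V(u_n,A)\leq M$. Second, using Proposition~\ref{prop:characterisation}, Proposition~\ref{prop:psi sui salti} and Theorem~\ref{thm:Chain Rule}, I would apply Lemma~\ref{lemma:Linftyapprox} to select $R_n\to+\infty$ so that the candidate $y_n:=\psi_{R_n}\circ u_n$ on a neighbourhood of $\overline{A'}$, extended to $w$ on $A\setmeno\overline{A'}$, lies in $\GBVsvector$, equals $w$ on $A\setmeno\overline{A'}$, and satisfies \eqref{eq: yk is minimizing}; here the cost of the ``cut'' near $\{|u_n|=R_n\}$ is absorbed by $\e_n$ thanks to the choice of $R_n$ and to assumption (g5), which allows the new vectorial jumps produced by the truncation to be compared with those of $u_n$ up to the multiplicative constant $c_5$. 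Third, the piecewise Poincar\'e inequality combined with the pointwise bound $|y_n|\leq\sigma R_n$ coming from \eqref{eq:PsiRbounded} yields, for each $n$, an $L^{p_n}$-type control with constants uniform in $n$; a diagonal extraction along a subsequence then produces a single continuous increasing $h$ with $h(t)\to+\infty$ for which \eqref{eq:Existence of psi} holds.

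\textbf{Main obstacle.} The scalar selection of \cite[Lemma 5.1]{DalToa22}, which relies on a coarea-type argument applied to $u_n$, has no immediate vectorial analogue: the level sets $\{|u_n|=R\}$ of a vector field do not carry the fine structure of the level sets of a single scalar $BV$ function, and the surface energy produced at such a level by applying $\psi_R\circ\cdot$ is not expressible as a scalar surface integral. The main new input is therefore the vectorial Lemma~\ref{lemma:Linftyapprox}, which must simultaneously produce a divergent sequence $R_n$, ensure that $\psi_{R_n}\circ u_n\in\BVvect$ with controlled energy via Proposition~\ref{prop:characterisation}, and make the gluing across $\{|u_n|\approx R_n\}$ compatible with assumption (g5). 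Once this lemma is in place, the remainder is a careful adaptation of the bookkeeping in \cite[Section 5]{DalToa22} and \cite[Theorem 7.13]{DalToa23} to the vectorial setting, using Proposition~\ref{prop:properties of CantorPart} to handle the Cantor part and Lemma~\ref{lemma:Dcu is a sup} for the passage to the limit in $m$.
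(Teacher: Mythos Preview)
Your high-level strategy—follow \cite[Theorem 5.5]{DalToa22} and \cite[Theorem 7.13]{DalToa23}, with Lemma~\ref{lemma:Linftyapprox} as the only genuinely new vectorial ingredient—is exactly the paper's. But you misread what Lemma~\ref{lemma:Linftyapprox} is. It is not a device for selecting truncation radii $R_n$; it is the vectorial version of the piecewise-Poincar\'e/Caccioppoli-partition lemma \cite[Lemma 5.1]{DalToa22}: given $u$ with $V(u)\leq M$, it produces a Caccioppoli partition $(P^j)_j$ of $A$ and constants $t^j\in\Rk$ such that $v:=u-\sum_j t^j\chi_{P^j}\in\BVvect\cap L^\infty(A;\Rk)$, with $\|v\|_\infty$ and $\sum_j\hd(\partial^*P^j)$ bounded only in terms of $M$. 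Its proof has nothing to do with $\psi_R$ or with a coarea argument: one simply applies the scalar lemma to each component $u_i$ and intersects the $k$ resulting partitions.

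Consequently the modification $y_n$ is not $\psi_{R_n}\circ u_n$. In the Dal Maso--Toader scheme one uses the partition from Lemma~\ref{lemma:Linftyapprox} to keep $u_n$ on the pieces $P_n^j$ with $|t_n^j|$ moderate and to replace it on the remaining pieces; the energy increment is surface energy along the boundaries of the discarded pieces, and (g5) enters precisely here, comparing the new jump (of size $O(\|v_n\|_\infty)$) with the original jump across $\partial^*P_n^j$ (of size at least of order $|t_n^j|$, hence $\geq c_5\|v_n\|_\infty$ on the discarded pieces). Your smooth truncation $\psi_{R_n}\circ u_n$ creates no new jumps at all (it is $1$-Lipschitz), so (g5) has nothing to act on there; more importantly, the bound $|y_n|\leq\sigma R_n$ with $R_n\to\infty$ gives no uniform integrability by itself. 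It is the Caccioppoli partition and the uniform bound on $\|v_n\|_\infty$ that produce the single $h$ in \eqref{eq:Existence of psi}, not a diagonal argument over diverging $L^\infty$ bounds.
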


Given an $\Ld$-measurable set $E$, we recall that  a countable collection $(P^j)_{j}$ of $\Ld$-measurable subsets of $E$ is said to be a Caccioppoli partition of $E$ if 
\begin{eqnarray*}
    &\displaystyle\Ld\Big(E\setminus \bigcup_{j=1}^\infty P^j\Big)=0,\\
    &\displaystyle \sum_{j=1}^\infty\hd(\partial^*P^j\cap E)<+\infty.
\end{eqnarray*}
\begin{lemma}\label{lemma:Linftyapprox}
    For every $M>0$ and $u\in \GBVsvector$ satisfying 
    \[ V(u)\leq M,\]
    there exists a Caccioppoli partition $(P^j)_j$ of $A$ and a family of translations 
    $(t^j)_{j\in\N}\subset\Rk$ such that the function
    \begin{equation*}\label{eq:deftranslated}
    v=u-\sum_{j=1}^\infty t^j\chi_{P^j}
      \end{equation*}
    is in $\BVvect\cap L^\infty(A,\Rk)$ and the following estimates hold
\begin{eqnarray}
  \label{eq:caccioppolivector}   & \displaystyle \sum_{j=1}^\infty\mathcal{H}^{d-1}(\partial^*P^j)\leq k(2+2M+\mathcal{H}^{d-1}(\partial A)),\\
   & \label{eq:caccioppolinormvector} \displaystyle\|v\|_{L^\infty(A;\Rk)}\leq 2M.
\end{eqnarray}
\end{lemma}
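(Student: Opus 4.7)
The plan is to reduce to the scalar analogue \cite[Lemma 5.1]{DalToa22} by treating the components of $u$ separately. The first step is to verify that $V(u_i)\leq V(u)\leq M$ for every $i\in\{1,\dots,k\}$. This uses the elementary inequalities $|\xi_i|\leq |\xi|_{\rm op}$ for the rows $\xi_i$ of $\xi\in\Rkd$ and $|\zeta_i|\wedge 1\leq |\zeta|\wedge 1$ for $\zeta\in\Rk$, together with the inclusion $J_{u_i}\subset J_u$ up to $\hd$-null sets (which follows from point (m) of Section 2 applied component by component), so that each scalar functional is dominated by its vectorial counterpart.

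Having done this, I would apply the scalar lemma to each $u_i$, obtaining a Caccioppoli partition $(P_i^{\,j})_{j\in\N}$ of $A$ and translations $(t_i^{\,j})_{j\in\N}\subset \R$ such that the function
\[
v_i:=u_i-\sum_{j=1}^\infty t_i^{\,j}\,\chi_{P_i^{\,j}}
\]
belongs to $BV(A)\cap L^\infty(A)$, with $\|v_i\|_{L^\infty(A)}\leq 2M$ and $\sum_j \hd(\partial^* P_i^{\,j})\leq 2+2M+\hd(\partial A)$.

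The key step is then to assemble the $k$ scalar outputs into a single vectorial partition by taking the common refinement. After fixing a bijection between $\N$ and $\N^k$, one sets, for every multi-index $\alpha=(j_1,\dots,j_k)\in\N^k$,
\[
P^{\alpha}:=\bigcap_{i=1}^k P_i^{\,j_i},\qquad t^{\alpha}:=(t_1^{\,j_1},\dots,t_k^{\,j_k})\in\Rk.
\]
The refined family is a Caccioppoli partition of $A$: measurability and the covering property are immediate, while \eqref{eq:caccioppolivector} follows from the inclusion $\partial^* P^{\alpha}\subset \bigcup_{i=1}^k \partial^* P_i^{\,j_i}$, valid up to $\hd$-null sets, after summing over all multi-indices; precisely here the factor $k$ multiplying $(2+2M+\hd(\partial A))$ appears. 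Since on each piece $P^\alpha$ the $i$-th component of $v=u-\sum_\alpha t^\alpha\chi_{P^\alpha}$ coincides with $v_i$, the bound \eqref{eq:caccioppolinormvector} is inherited from the componentwise $L^\infty$ control, up to adjusting notation for the ambient norm on $\Rk$.

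The main bookkeeping hurdle, which I expect to be the only non-routine point, is verifying that the common refinement is still a Caccioppoli partition, namely that only countably many pieces are non-negligible and that the total perimeter is summable; both reduce to the inclusion of reduced boundaries above combined with the scalar bounds. Membership $v\in \BVvect$ then follows componentwise from $v_i\in BV(A)\cap L^\infty(A)$, completing the argument.
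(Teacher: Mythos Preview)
Your proposal is correct and follows essentially the same approach as the paper: apply the scalar lemma \cite[Lemma 5.1]{DalToa22} componentwise, then take the common refinement of the $k$ resulting partitions and assemble the translations into vectors. The paper's proof is in fact slightly terser than yours, as it does not spell out the verification that $V(u_i)\leq M$ and uses the perimeter inequality $\hd(\partial^*(E_1\cap E_2))\leq \hd(\partial^*E_1)+\hd(\partial^*E_2)$ in place of your reduced-boundary inclusion, but these are the same argument.
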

\begin{proof}
Thanks to \cite[Lemma 5.1]{DalToa22}, for every $i=1,...,k$ we can find a Caccioppoli Partition $(P^j_i)_{j}$ and a family of translations $(t_i^j)_{j\in\N}\subset\R$ satisfying 
\begin{eqnarray}
   \label{eq:caccioppolicomponente}  & \displaystyle\sum_{j=1}^\infty\mathcal{H}^{d-1}(\partial^*P_i^j)\leq 2+2M+\mathcal{H}^{d-1}(\partial A),\\
   & \displaystyle\label{eq:caccioppolinormcomponente}\|u_i-\sum_{j=1}^\infty t^j_i\chi_{P_i^j}\|_{L^\infty(A)}\leq 2M.
\end{eqnarray}
The family
\begin{eqnarray}\nonumber 
    \mathcal{P}:=\{P\subset A\!: P=\cap_{i=1}^kP^{j_i}_i \text{ for $(j_1,...,j_k)\in\mathbb{N}^k$}\}
\end{eqnarray}
is countable and is still a partition of $A$. We denote every element of $\mathcal{P}$ as $P_\ell$ for some $\ell\in\mathbb{N}$. Thanks to \eqref{eq:caccioppolicomponente} and to the standard inequality 
\[\mathcal{H}^{d-1}(\partial^*(E_1\cap E_2)\cap A)\leq \mathcal{H}^{d-1}(\partial^*E_1\cap A)+\mathcal{H}^{d-1}(\partial^*E_2\cap A), \]
 which holds for every $E_1$ and 
$E_2$ of finite perimter, we get that $(P^\ell)_{\ell}$ satisfies \eqref{eq:caccioppolivector}. In particular, $(P^\ell)_\ell$ is a Caccioppoli partition of $A$.

For every $\ell\in\N$ let  $(j^\ell_1,...,j_k^\ell)$ be such that $P_\ell=\cap_{i=1}^kP_i^{j^\ell_i}$ and let $(t_1^{j^\ell_1},...,t_k^{j^\ell_k})$ be as above. We set $t^\ell=(t_1^{j_1^\ell},...,t_k^{j_k^\ell})$ and 
\begin{equation*}
    v:=u-\sum_{\ell=1}^{+\infty} t^\ell\chi_{P^\ell}.
\end{equation*}
Recalling \eqref{eq:caccioppolinormcomponente}, we infer that $v$ satisfies \eqref{eq:caccioppolinormvector}, so that Lemma \ref{lemma:Linftyapprox} is finally proved.
\end{proof}

\begin{remark}

Let $f\colon A\times \Rkd\to [0,+\infty) $ satisfying (f1)-(f3), let $ g\colon A\times \Rk\times \mathbb{S}^{d-1}\to [0,+\infty)$ satisfying (g1)-(g5), and let  $w\in W^{1,1}(A;\Rk)$. Consider  $(u_n)_n\subset GBV_\star(A;\Rk)$ a minimising sequence for the problem
\begin{equation}\label{eq:minimum problem}
\inf\{E^{f,g}(u)\colon u\in GBV_\star(A;\Rk), \,\,u=w\,\,\Ld\text{-} a.e. \text{ on } A\setminus\overline{A'}\},
\end{equation}
where $E^{f,g}$ is as in Definition \ref{def:Functionals Efg}.

Thanks to (f2)-(f3), (g2)-(g3), and \eqref{eq:Finfty bounds}, it is easily seen that for some $M>0$ we have $V(u_n)\leq M$  for every $n\in\N$. Hence, by Theorem \ref{thm:Compactness} there exist a minimising sequence $(y_n)_n$, with $y_n=w$ on $A\setminus\overline{A'}$ and satisfying \eqref{eq:Existence of psi}. In particular, $(y_n)_n$ satisfies the hypotheses of Theorem \ref{thm:Compactness naive}, so that there exists a subsequence of $(y_n)_n$, not relabelled, converging in $L^0(A;\Rk)$ to a function $y$, with $y=w$ $\Ld$-a.e. in $A\setminus\overline{A'}$. If the functional $E^{f,g}$ is lower semicontinuous with respect to the convergence of $L^0(A;\Rk)$ (for instance if it satisfies the hypoteses of Theorem
\ref{thm:lowersemicontinuity}), we conclude that $y$ minimises \eqref{eq:minimum problem}.
\end{remark}
\bigskip
\noindent \textsc{Acknowledgements.}
The author wishes to thank Professor Gianni Dal Maso for all the useful  discussions on the topic.
This paper is based on work supported by the National Research Project (PRIN  2022J4FYNJ) \lq\lq Variational methods for stationary and evolution problems with singularities and interfaces\rq\rq, funded by the Italian Ministry of University and Research. 
The author is a member of the Gruppo Nazionale per 
l'Analisi Matematica, la Probabilit\`a e le loro Applicazioni (GNAMPA) of the 
Istituto Nazionale di Alta Matematica (INdAM).

\emergencystretch=1em

\printbibliography

\end{document}